\newtheorem{theorem}{Theorem}
\newtheorem{corollary}[theorem]{Corollary}
\newcommand{\X}{{\mathcal{X}}}
\newcommand{\cU}{{\mathcal{U}}}
\newcommand{\cS}{{\mathcal{S}}}	
\newcommand{\cK}{{\mathcal{K}}}
\title{Robust Combinatorial Optimization Problems Under Budgeted Interdiction Uncertainty}
\author{Marc Goerigk}
\author{Mohammad Khosravi\footnote{Corresponding author. Email: mohammad.khosravi@uni-passau.de}}
\affil{Business Decisions and Data Science, University of Passau,\authorcr Dr.-Hans-Kapfinger-Stra{\ss}e 30, 94032 Passau, Germany}
\date{}
\begin{document}

\maketitle

\begin{abstract}
In robust combinatorial optimization, we would like to find a solution that performs well under all realizations of an uncertainty set of possible parameter values. How we model this uncertainty set has a decisive influence on the complexity of the corresponding robust problem. For this reason, budgeted uncertainty sets are often studied, as they enable us to decompose the robust problem into easier subproblems. We propose a variant of discrete budgeted uncertainty for cardinality-based constraints or objectives, where a weight vector is applied to the budget constraint. We show that while the adversarial problem can be solved in linear time, the robust problem becomes NP-hard and not approximable. We discuss different possibilities to model the robust problem and show experimentally that despite the hardness result, some models scale relatively well in the problem size.
\end{abstract}

\noindent\textbf{Keywords:} robust optimization; combinatorial optimization; budgeted uncertainty; knapsack uncertainty

\noindent\textbf{Acknowledgements:} Supported by the Deutsche Forschungsgemeinschaft (DFG) through grant 459533632.

\section{Introduction}\label{sec:introduction}

Uncertainty can manifest in various forms, such as imprecise data or the inherent unpredictability of the future. A notable case study utilizing linear programs \cite{ben2000robust} demonstrated that even slight changes in problem data can significantly shift an optimal solution towards infeasibility, rendering it practically useless. Consequently, a range of decision-making approaches under uncertainty have been developed, including stochastic programming \cite{kail2005stochastic}, fuzzy optimization \cite{lodwick2010fuzzy}, and robust optimization \cite{ben2009robust}. Often, such approaches make the resulting decision-making problems more challenging to solve than their nominal counterparts.
% What these approaches typically have in common is that incorporating uncertainty into the decision problem \mohammad{often} increases its computational complexity. In many cases, specialized solution approaches tailored to the specific problem are required.
The focus of this paper is on robust combinatorial decision problems, which have the distinct advantage that a probability distribution on the uncertain data does not need to be known. More formally, consider some nominal combinatorial problem
\begin{align*}
& \min \sum_{i\in[n]} c_i x_i \tag*{(Nom)}\\
& \text{s.t.} \; \pmb{x} \in \X \subseteq \{0,1\}^n
\end{align*}
where we write vectors in bold and use the notation $[n]$ to denote sets $\{1,\ldots,n\}$. In addition, assume that the data $\pmb{c}$ in the objective function is not known exactly. Given a set of possible data values $\cU$, the classic min-max approach to robust optimization is to solve the problem
\begin{equation}
\min_{\pmb{x}\in\X} \max_{\pmb{c} \in \cU} \sum_{i\in[n]} c_ix_i \tag*{(RO)}
\end{equation}
Many more variants of robust optimization problems exist, see e.g. \cite{goerigk2016algorithm,kasperski2016robust,buchheim2018robust} for an overview. What they have in common is that a set $\cU$ containing all scenarios can be formulated by the decision maker, and is made available to the optimization problem. Data-driven robust optimization \cite{bertsimas2018data} aims at automating this step by formulating suitable uncertainty sets based on available data (e.g., by using on the risk preference of the decision maker).

There is typically a trade-off between the modeling capabilities of the uncertainty set $\mathcal{U}$ and the complexity of the resulting problem. A discrete scenario set $\mathcal{U}$ offers broad flexibility as it allows direct utilization of any amount of historical data observations in the model. However, it comes with a drawback that the robust versions of relevant combinatorial problems are already computationally difficult (NP-hard) even when considering only two scenarios \cite{kasperski2016robust}. Representing $\mathcal{U}$ using a general polyhedron, defined by its inner or outer description, suffers from the same limitation \cite{goerigk2022robust}.

A significant breakthrough was made with the introduction of budgeted uncertainty sets, also known as the Bertsimas-Sim approach \cite{bertsimas2003robust,bertsimas2004price}. This approach addresses an uncertain linear objective $\pmb{c}^\intercal \pmb{x}$, where each coefficient $i \in [n]$ is bounded by a lower bound $\underline{c}_i$ and an upper bound $\overline{c}_i$. Moreover, only a fixed integer $\Gamma$ of coefficients are allowed to deviate simultaneously from their lower to upper bounds. In other words, $\mathcal{U}$ incorporates a cardinality constraint of the following form:
\[\cU_\Gamma =  \Bigg\{ \pmb{c} \in \mathbb{R}^n \colon \exists \pmb{\delta} \in \{0,1\}^n \; \text{s.t.} \; c_i = \underline{c}_i + (\overline{c}_i - \underline{c}_i)\delta_i , \sum_{i\in[n]} \delta_i \leq \Gamma \Bigg\} \]
The introduction of this simple idea has had a profound impact on the field of robust optimization. The two papers that presented this idea continue to be widely cited, highlighting their significance. The appeal of uncertainty sets of this nature lies in their simplicity and intuitive nature. Furthermore, it has been demonstrated that the robust min-max problem can be decomposed into a manageable number (specifically, $O(n)$) of nominal-type problems (Nom). This decomposition allows for increased modeling flexibility without incurring significant computational complexity. If the nominal problem can be solved in polynomial time, the corresponding robust problems can be solved in polynomial time as well.

The advantages offered by budgeted uncertainty sets have resulted in their widespread and varied applications to real-world problems. These applications encompass a range of domains, including portfolio management \cite{bertsimas2008robust}, wine grape harvesting \cite{bohle2010robust}, supply chain control \cite{bertsimas2006robust}, furniture production planning \cite{alem2012production}, train load planning \cite{bruns2014robust}, and many others. The versatility of budgeted uncertainty sets has made them a valuable tool in addressing uncertainty and optimizing decision-making in numerous practical scenarios. 

A noteworthy characteristic of $\mathcal{U}_\Gamma$ is that if $\Gamma$ is an integer, we can utilize continuous deviations $\pmb{\delta} \in [0,1]^n$ without altering the problem. This is due to the fact that when finding an optimal strategy for the adversary in the problem $\max_{\pmb{c}\in\mathcal{U}} \pmb{c}^\intercal \pmb{x} $ given a fixed solution $\pmb{x}$, it is sufficient to sort the items chosen by $\pmb{x}$ based on the potential cost deviation $\overline{c}_i - \underline{c}_i$, and select the $\Gamma$ largest values. Consequently, the equivalence between ``discrete'' and ``continuous'' budgeted uncertainty holds. However, this equivalence does not generally hold in the case of multi-stage robust problems, where recourse actions can be taken after the cost scenario has been revealed (see, e.g., the discussion in \cite{goerigk2022complexity}).

The effectiveness of budgeted uncertainty sets has led to the emergence of various variants and generalizations of this approach. In the paper by \cite{bertsimas2004robust}, norm-based uncertainty sets were introduced. It was demonstrated that the traditional budgeted uncertainty set can be constructed using a specific norm known as the D-norm. Another variant is multi-band uncertainty \cite{busing2012new}, which involves a system of deviation values $d^1_{ij} < d^2_{ij} < \ldots < d^K_{ij}$ with both lower and upper bounds on the number of possible deviations from each band $k \in [K]$. In variable budgeted uncertainty \cite{poss2013robust}, the number of deviations $\gamma$ taken into account may depend on the size $\lVert\pmb{x}\rVert_1$ of the solution $\pmb{x}$ for which the adversarial problem is being solved.

Additionally, there is knapsack uncertainty \cite{poss2018robust}, which can be represented as follows:
\[ \cU_{knap} = \Bigg\{ \pmb{c} \in \mathbb{R}^n : \exists \pmb{\delta} \in  [0,1]^n \; \text{s.t.} \; c_i = \underline{c}_i + (\overline{c}_i - \underline{c}_i) \delta_i , \sum_{i\in[n]} a_{ji} \delta_i \leq b_j , j \in [m] \Bigg\} \]
Here, the set of possible scenarios is bounded by $m$ linear knapsack constraints. When the value of $m$ is fixed, similar results to those obtained for the original set $\mathcal{U}_\Gamma$ can be derived. A special case of this type of set is locally budgeted uncertainty, see \cite{goerigk2021robust,yaman2023short}, where each of the knapsack constraints affects a subset of variables, and these subsets are disjoint between constraints. These variants and generalizations of budgeted uncertainty sets provide additional flexibility and adaptability to various problem settings, enhancing the robustness of decision-making under uncertainty.

In this paper we consider a new type of uncertainty set, applicable to an objective or constraints that involve the cardinality\footnote{As $\pmb{x}$ is binary, it corresponds to a subset $X$ of $[n]$, where $X=\{i\in[n]: x_i = 1\}$. The notion of cardinality refers to $|X|=\|\pmb{x}\|_1$.} $\lVert\pmb{x}\rVert_1$, e.g., to problems where the task is to maximize the size of a set, or where this cardinality is not allowed to fall below a certain threshold. The motivation to consider such sets comes from a real-world problem involving the composition of teams to take on a set of jobs under uncertain skill requirements (see \cite{anoshkina2019technician,anoshkina2020robust}). In such problems, one would like to compose teams that can take on the maximum possible number of jobs. From an adversarial perspective, the task is to change the job skill requirements in a way that minimizes the number of jobs that can be carried out successfully. From a more theoretical perspective, the study of robust combinatorial problems often makes use of selection-type problems (see, e.g., \cite{averbakh2001complexity,dolgui2012min,deineko2013complexity,kasperski2015approximability}).
In the most basic form, the selection problem requires us to select $p$ out of $n$ possible items, i.e., to solve
\[\min \Bigg\{\pmb{d}^\intercal \pmb{x} \colon \sum_{i\in[n]} x_i \geq p \; , \; \pmb{x} \in \{0,1\}^n\Bigg\}\]
with known costs $\pmb{d} \in \mathbb{R}^n_+$. While this nominal problem is trivial to solve, treating robust variants becomes more complex. A new perspective on problems of this type is to locate the uncertainty not (only) on the item costs; instead, items have different degrees of reliability, and an adversary tries to violate the constraint $\sum_{i\in[n]} x_i \geq p$.

% \mohammad{with interdiction in a part of the chosen solution which is bounded by a fixed budget $B$}.

Motivated by these two problems, but being applicable to a wider range of problems as well, the ``budgeted interdiction'' approach that we thus propose is to consider uncertainty sets of the form
\[\cU = \Bigg\{\pmb{c} \in \{0,1\}^n \colon \sum_{i\in[n]} w_i c_i \leq B \Bigg\}\]
with $\pmb{w}\in\mathbb{N}^n$ and $B \in\mathbb{N}$. The adversary can therefore interdict a solution (i.e., let items fail), but has a specified budget for this purpose. Throughout the paper, we assume that each $w_i$ is not larger than $B$; otherwise, its coefficient cannot be attacked and is therefore not uncertain.
The uncertainty can affect a cardinality objective function $(\pmb{1}-\pmb{c})^\intercal \pmb{x}$ that should be maximized, or a cardinality constraint $(\pmb{1}-\pmb{c})^\intercal \pmb{x} \ge p$. Note that in the corresponding nominal problems, vector $\pmb{c}$ is not present, but is introduced in the robust problem to model the uncertainty of the vector $\pmb{1}$.
Cardinality constraints also play a role in many optimization problems that allow a cut-based formulation. For example, the shortest path problem can be written as
\begin{align*}
\min\ & \sum_{i\in E} d_e x_e \\
\text{s.t.} & \sum_{e\in \delta(S)} x_e \ge 1 & \forall S \in \mathcal{S}\\
& x_e \in \{0,1\}
\end{align*}
with $\mathcal{S} = \{S\subseteq V \colon s \in S, t \notin S\}$. Cut-based problem formulations are also used for the generalized Steiner tree, spanning tree, feedback vertex set, or traveling salesperson problems \cite{kortevygen}.

Observe that this definition of uncertainty set is essentially the budgeted uncertainty set $\cU_\Gamma$ ``upside down'': while $\cU_\Gamma$ has a bound on the number of coefficients that can deviate and the effect of deviation is given by some parameter $\overline{c}_i-\underline{c}_i$, here we want to maximize the number of deviations and each deviation has a cost parameter $w_i$.
Note that different to $\cU_{knap}$, there is a single budget constraint, we consider a discrete instead of continuous deviation, and in particular, the vector $\pmb{c}$ is binary.

As an example, consider the selection problem
\[\min \Bigg\{2x_1+3x_2+4x_3+5x_4 \colon \sum_{i\in[4]} x_i \geq 1 , \pmb{x} \in \{0,1\}^4 \Bigg\}\]
where the cardinality constraint $\sum_{i\in[4]} x_i \geq 1$ is uncertain and thus can be attacked by an adversary. The cardinality constraint of the robust counterpart of this example becomes
\[\min_{\pmb{c}\in\cU} (\pmb{1}-\pmb{c})^\intercal\pmb{x} = \sum_{i\in[n]} x_i - \max_{\pmb{c}\in\cU} \pmb{c}^\intercal\pmb{x} =  \sum_{i\in[n]} x_i - \phi(\pmb{x}) \geq 1\]
where the function $\phi(\pmb{x})$ represents the number of items that can fail. To further illustrate this setting, let us assume that
\[\phi(\pmb{x}) = \max \Bigg\{\sum_{i\in[n]} x_i c_i \colon 3c_1+7c_2+4c_3+10c_4 \leq 10, \pmb{c}\in\{0,1\}^4 \Bigg\}\]
that is, in the definition of $\cU$, we use $B = 10$ and $w = (3,7,4,10)^\intercal$. A possible solution to the robust problem is to pick items 1, 2 and 3 at cost $2 + 3 + 4 = 9$. The adversary can attack items 1 and 3, but does not have sufficient budget to let all three items fail. An even better solution is to pick items 1 and 4 at cost $2 + 5 = 7$. In this case, the adversary can only attack one of the two items.

The remainder of this paper is structured as follows. In Section~\ref{sec:problems}, we discuss the complexity of the robust problem with budgeted interdiction uncertainty, and prove that the problem is not approximable. Furthermore, we provide five compact formulations to solve problems with cardinality constraints under interdiction uncertainty set in Section~\ref{sec:formulations}. Experimental results illustrating the performance of the models for the selection, job assignment and 2-edge-connected subgraph problems are collected in Section~\ref{sec:experiments}. We summarize our findings and pointing out further research questions in Section~\ref{sec:conclusions}. The detailed information on how to model the compact formulation of both job assignment and cut-based problems are provided in Appendix~\ref{app:jobassignment} and \ref{app:cutbased}, respectively.

\section{Complexity Analysis}\label{sec:problems}

In order to check the complexity level of the robust selection problem under interdiction budgeted uncertainty, we first need to introduce the compact formulation of it, thus we have
\begin{align*}
\min\ &\sum_{i\in[n]} d_i x_i \tag*{(ROSel)}\\
\text{s.t. } & \sum_{i\in[n]} x_i - \phi(\pmb{x})\ge p & \forall \pmb{c}\in\cU \\
& \pmb{x}\in\{0,1\}^n
\end{align*}
where the adversary problem is
\[\phi(\pmb{x}) = \max \Bigg\{\sum_{i\in[n]} x_i c_i \colon \sum_{i\in[n]} w_i c_i \leq B \; , \; c_i \in \{0,1\} \; \forall i \in [n] \Bigg\} \]
for a given $\pmb{x} \in \{0,1\}^n$. There is a trivial algorithm to solve this problem; namely, we sort items $i$ with $x_i = 1$ by non-decreasing weight $w_i$, and pack items in this order until the budget $B$ cannot accommodate any further items. Hence, the adversarial problem can be solved in $O(n)$ time (as it is not necessary to sort the complete vector, see, e.g. \cite[Chapter 17.1]{kortevygen}). Now we show that the decision version of the robust selection problem with interdiction uncertainty affecting the constraints (ROSel) is hard.

\begin{theorem}\label{th1}
The following decision problem is NP-complete: Given $\pmb{d}\in\mathbb{N}^n$, $\pmb{w}\in\mathbb{N}^n$, $B\in\mathbb{N}$, and $V\in\mathbb{N}$, is there a vector $\pmb{x}\in\{0,1\}^n$ with $\sum_{i\in[n]} x_i - \phi(\pmb{x}) \ge 1$ and $\sum_{i\in[n]} d_i x_i \le V$?
\end{theorem}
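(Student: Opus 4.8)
The plan is to establish NP-completeness by (i) confirming membership in NP and (ii) reducing from a known NP-complete problem. Membership is immediate: given a candidate $\pmb{x}$, we can verify $\sum_{i\in[n]} c_i x_i \le V$ in linear time, and since the adversarial problem $\phi(\pmb{x})$ is solvable in $O(n)$ time by the greedy procedure described above, the condition $\sum_{i\in[n]} x_i - \phi(\pmb{x}) \ge 1$ can also be checked in polynomial time. So the main work is the hardness reduction.

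For the reduction, the natural source problem is \textsc{Partition} (or \textsc{Subset Sum}), since the adversarial subproblem is itself a knapsack and the weights $\pmb{w}$ together with the budget $B$ give us direct control over which subsets an adversary can attack. The intuition to exploit is the one illustrated by the worked example in the excerpt: the constraint $\sum_{i\in[n]} x_i - \phi(\pmb{x}) \ge 1$ says that whatever items the solution $\pmb{x}$ selects, the adversary must be unable to destroy all of them within budget $B$; equivalently, no subset of the selected items that fits in the knapsack can cover the entire selection. The cost bound $\sum_{i\in[n]} c_i x_i \le V$ is a second lever that can be used to force the solution to pick a prescribed structure. Given an instance of \textsc{Partition} with values $a_1,\ldots,a_k$ summing to $S$, I would build items whose weights $w_i$ encode the $a_i$, set $B$ to roughly $S/2$, and choose the costs $c_i$ and the threshold $V$ so that any feasible $\pmb{x}$ is forced to select a particular collection of items; feasibility of the robust constraint would then correspond exactly to the nonexistence (or existence) of a balanced partition that the adversary could use to attack every selected item.

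The key steps, in order, are: first fix the source problem and its instance notation; second, define the transformation (the number of items $n$, the weight vector $\pmb{w}$, budget $B$, costs $\pmb{c}$, and threshold $V$) and argue it is computable in polynomial time; third, prove the forward direction, that a ``yes'' instance of \textsc{Partition} yields a feasible $\pmb{x}$ satisfying both constraints; and fourth, prove the converse, that any feasible $\pmb{x}$ forces out a solution to the \textsc{Partition} instance. The delicate point throughout is to control $\phi(\pmb{x})$, which is a maximization and hence hard to bound from above --- to guarantee $\sum_{i\in[n]} x_i - \phi(\pmb{x}) \ge 1$ we must certify that \emph{every} budget-feasible attack leaves at least one selected item alive.

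The main obstacle I anticipate is precisely this upper-bounding of the adversary's power. Lower bounds on $\phi$ are easy (exhibit one attack), but the robust constraint requires an upper bound, so the gadget must be engineered so that the combinatorics of which subsets fit within budget $B$ line up cleanly with the partition structure, typically by making the weights sufficiently ``rigid'' (e.g. with a scaling or padding trick) that the adversary's only way to attack all selected items is via an exact balanced split. Getting the costs $\pmb{c}$ and the threshold $V$ to pin down the selection without introducing unintended feasible solutions, while keeping all parameters polynomially bounded in the input size, is the part that will require the most care.
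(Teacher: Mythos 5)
Your skeleton (NP membership via the greedy evaluation of $\phi$, then a reduction from \textsc{Partition}) matches the paper's, but your proposal stops at a plan: the transformation is never pinned down (the costs, the threshold $V$, and both directions of the equivalence are left as placeholders), and the place where you anticipate the main difficulty reveals that the decisive idea is missing. You worry that upper-bounding $\phi(\pmb{x})$ requires making the weights ``rigid'' via scaling or padding, so that the adversary's only way to kill the whole selection is an exact balanced split. No such engineering is needed, because for $p=1$ the robust constraint collapses to a single linear inequality: the adversary can destroy \emph{all} selected items if and only if their total weight fits in the budget, i.e. $\sum_{i\in[n]} w_i x_i \le B$ --- attacking every selected item is the only attack relevant to this question, and it costs exactly the total selected weight. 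Hence, for integer data, $\sum_{i\in[n]} x_i - \phi(\pmb{x}) \ge 1$ holds if and only if $\sum_{i\in[n]} w_i x_i \ge B+1$. There is no combinatorics of ``which subsets fit'' to control, and the trivial bound $\phi(\pmb{x})\le\sum_{i\in[n]} x_i$ (with equality exactly in the case above) is all the upper-bounding one ever needs.

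With that observation the reduction is immediate, and it works not by forcing a particular collection of items, as you suggest, but by a sandwich argument: given $a_1,\ldots,a_n$ with $\sum_{i\in[n]} a_i = 2A$, set $c_i = w_i = a_i$, $B = A-1$, and $V = A$. The robust constraint then forces $\sum_{i\colon x_i=1} a_i \ge A$, the cost constraint forces $\sum_{i\colon x_i=1} a_i \le A$, so a feasible $\pmb{x}$ exists if and only if some subset sums to exactly $A$, i.e. if and only if the \textsc{Partition} instance is a yes-instance. Both directions are one line each and all parameters are polynomial in the input. In short: right source problem, right overall shape, but the reformulation of the robust constraint as a weight threshold --- and with it the actual reduction --- is absent from your proposal, and the obstacle you planned to spend the most effort on does not exist.
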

\begin{proof}
Observe that it is trivial to check if $\sum_{i\in[n]} x_i - \phi(\pmb{x}) \ge 1$ and $\sum_{i\in[n]} d_i x_i \le V$ for a given $\pmb{x}$, which means that the decision problem is indeed in NP.

To show NP-completeness, we make use of the partition problem: Given positive integers $v_1,\ldots,v_n$, is there a set $S \subseteq [n]$ such that $\sum_{i\in S} v_i= V$ with $V=\sum_{i\in[n]} v_i/2$?

Given such an instance of the partition problem, we construct a robust problem with budgeted interdiction in the following way. Set $d_i = w_i = v_i$ and $B = V - 1$. Then the constraint
\[\sum_{i\in[n]} x_i - \phi(\pmb{x}) \geq 1 \]
with
\[\phi(\pmb{x}) = \max \Bigg\{ \sum_{i\in[n]} x_i c_i \colon \sum_{i\in[n]} v_i c_i \leq V - 1 \Bigg\} \]
requires us to pack items of total weight strictly greater than $V - 1$ to avoid having all items interdicted. This means that the partition problem is a Yes-instance if and only if there is a feasible solution $\pmb{x}\in\{0,1\}^n$ with objective value less or equal to $V$. As the partition problem is well-known to be NP-complete \cite{garey1979computers}, the claim follows.
\end{proof}

This brief analysis shows that we lose a main advantage of classic budgeted uncertainty, where the robust problem can be decomposed into a set of nominal problems. Note that Theorem~\ref{th1} applies to optimization problems with an uncertain cardinality constraint and an objective $\sum_{i\in[n]} d_i x_i$ that should be minimized, but it also applies to the case of having one linear constraint $\sum_{i\in[n]} d_i x_i \le V$ and an uncertain cardinality objective that should be maximized. In particular, in the latter case this means that it is NP-complete to find a solution with a non-zero objective value; in other words, it is not possible to find a polynomial-time approximation algorithm for this setting, unless P=NP. Hence we conclude the following result.

\begin{corollary}
The optimization problem $\max_{\pmb{x} \in \X} \min_{\pmb{c} \in \cU} \sum_{i\in[n]} (1-c_i)x_i$ is not approximable, even if $\max_{\pmb{x} \in \X} \sum_{i\in[n]} x_i$ can be solved in polynomial time.
\end{corollary}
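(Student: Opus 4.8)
The plan is to deduce the corollary directly from Theorem~\ref{th1} via a standard gap-introducing argument. The key observation is that the objective $\min_{\pmb{c}\in\cU}\sum_{i\in[n]}(1-c_i)x_i$ equals $\sum_{i\in[n]}x_i - \phi(\pmb{x})$, so its optimal value over $\pmb{x}\in\X$ is a nonnegative integer. If the maximum is $0$, then every feasible solution has value $0$; if the maximum is at least $1$, then an optimal solution has value at least $1$. An approximation algorithm with \emph{any} finite multiplicative guarantee $\alpha\ge 1$ would, when applied to an instance whose optimum is at least $1$, be forced to return a solution of value at least $1/\alpha > 0$, hence a solution of value at least $1$ by integrality; when the optimum is $0$ it can only return value $0$. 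Thus such an algorithm would distinguish instances with optimum $0$ from those with optimum $\ge 1$, i.e. decide exactly the NP-complete problem of Theorem~\ref{th1}.

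Concretely, first I would set up the maximization variant of the problem so that $\X$ encodes the feasible region defined by the single linear constraint $\sum_{i\in[n]}c_ix_i\le V$ from the theorem, and observe that $\max_{\pmb{x}\in\X}\sum_{i\in[n]}x_i$ over this polytope is easy (it is itself a selection-type problem), establishing the side condition in the corollary statement. Next I would invoke the second reading of Theorem~\ref{th1} noted in the paragraph preceding it: deciding whether there exists a feasible $\pmb{x}$ with $\sum_{i\in[n]}x_i-\phi(\pmb{x})\ge 1$ is exactly the question of whether the robust optimal value is nonzero, and this is NP-complete. Then I would run the assumed $\alpha$-approximation and argue the dichotomy above: the returned value is strictly positive if and only if the true optimum is strictly positive, which by integrality lets us answer the NP-complete decision question in polynomial time, contradicting $\textrm{P}\ne\textrm{NP}$.

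The main subtlety, rather than obstacle, will be phrasing the notion of ''not approximable'' correctly for a maximization problem whose optimum can be zero. The cleanest formulation is that no polynomial-time algorithm can achieve any finite approximation ratio unless $\textrm{P}=\textrm{NP}$, and the argument hinges on the integrality of $\sum_{i\in[n]}x_i-\phi(\pmb{x})$ together with the fact that distinguishing optimum $0$ from optimum $\ge 1$ is already NP-hard. I would make explicit that any multiplicative approximation guarantee collapses in the presence of a zero optimum, since no finite ratio can certify a positive value, and that this is precisely why the NP-completeness of deciding a nonzero objective value rules out approximation. I expect the remaining steps to be routine, so I would keep the proof to a few lines, emphasizing the reduction from the decision version and the integrality argument.
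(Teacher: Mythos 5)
Your proposal is correct and follows essentially the same route as the paper: the paper derives the corollary from the second reading of Theorem~\ref{th1} (one linear budget constraint, uncertain cardinality objective), noting that it is NP-complete to decide whether a solution with non-zero robust objective exists, so by integrality of $\sum_{i\in[n]}x_i-\phi(\pmb{x})$ any finite-ratio approximation would resolve that decision problem. Your write-up merely makes explicit the gap argument and the easiness of $\max_{\pmb{x}\in\X}\sum_{i\in[n]}x_i$ that the paper leaves implicit.
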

\begin{proof}
Given a partition problem as in the proof of Theorem~\ref{th1}, set $\X=\{ \pmb{x}\in\{0,1\}^n : \sum_{i\in[n]} v_i x_i \le V\}$. Then there is a solution with objective value greater or equal to one if and only if the partition problem is a Yes-instance. Hence, there cannot be an $\alpha$-approximation for any $\alpha>0$, unless P=NP.
\end{proof}

% \begin{theorem}
% The following decision problem is NP-complete: Given $\pmb{c}\in\mathbb{R}^n_+$, $\pmb{w}\in\mathbb{R}^n_+$ and $B\in\mathbb{R}_+$, is there a vector $\pmb{x}\in\{0,1\}^n$ with $\sum_{i\in[n]} c_i \le V$
%
% 	The problem $\max_{\pmb{x} \in \X} \min_{\pmb{c} \in \cU} \sum_{i\in[n]} (1-c_i)x_i$ is NP-hard and not approximable, even if $\max_{\pmb{x} \in \X} \sum_{i\in[n]} x_i$ can be solved in polynomial time.
% \end{theorem}
% \begin{proof}
% 	As before, we use the following NP-
% 	Reduction from Partition: Given $n$ integers $a_i$, is there set $S$ such that $\sum_{i \in S} a_i = \frac{A}{2}$?\\
% 	Set $X = \{ x \in {0,1}^n : \sum a_i x_i \le A\}$. Note that $\max_{x\in \X} \sum x_i$ can be solved in polynomial time (simply pack items in increasing order of their weight). Set $\cU = \{ c \in {0,1}^n : \sum a_i c_i \le A-1\}$. For any feasible $\pmb{x}$ with $\sum a_i x_i < A$, we can attack all items, so the objective value is zero. If $\pmb{x}$ is so that $\sum a_i = A$, the adversary can attack all but one item, so the objective value is one.
% \end{proof}

\section{Model Formulations}
\label{sec:formulations}

In this section, we introduce five compact formulations of the robust problem, where we focus on an uncertain cardinality constraint $\sum_{i\in[n]} x_i \ge p$ for ease of presentation.
Additional constraints on $\pmb{x}$ may be considered, which are assumed to be modeled indirectly in the set $\X\subseteq\{0,1\}^n$. That is, we consider reformulations of the following type of robust problem with cardinality constraints:
\begin{align*}
\min\ &\sum_{i\in[n]} d_i x_i \\
\text{s.t. } & \sum_{i\in[n]} (1-c_i) x_i \ge p & \forall \pmb{c}\in\cU \\
& \pmb{x}\in\X
\end{align*}
where the nominal problem corresponds to the case $\pmb{c}=\pmb{0}$. In addition, without loss of generality, we assume that the items are sorted based on their weights ($w_i$), non-decreasingly.

\subsection{IP-1}

The first idea to find a compact formulation of the problem is only applicable to the case $p=1$ with integer weights $\pmb{w}$. This means we only need to have one item after the adversary attacks, a case that remains hard, as Theorem~\ref{th1} shows. Therefore, it suffices to pack items with minimum cost whose total weight strictly exceeds the adversarial budget $B$. This idea can be formulated as follows:
\begin{align*}
\min & \sum_{i\in[n]} d_i x_i \tag{IP-1}\\
\text{s.t.} & \sum_{i\in[n]} w_i x_i \geq B + 1\\
&\pmb{x}\in\X
\end{align*}

\subsection{IP-2}

We now consider the general case of arbitrary values for $p$. As noted, the adversarial problem $\phi(\pmb{x})$ can be solved in polynomial time by packing items with smallest weight first. Therefore, we introduce variables $\lambda_k\in\{0,1\}$ for all $k\in[n]$, where $\lambda_k$ is active if and only if we attack the first $k$ items (note that the case $k=0$ can be ignored, as we can always attack at least one item, due to each $w_i$ being not larger than $B$). An attack only incurs costs on the interdiction budget if $x_i=1$. Hence, we obtain the following integer program:
\begin{align}
\phi(\pmb{x}) = \max & \sum_{k\in[n]} (\sum_{i\in[k]} x_i) \lambda_k \label{phi-2-1}\\
\text{s.t.} & \left(\sum_{i\in[k]} w_i x_i - B\right) \lambda_k \leq 0 & \forall k \in [n] \label{phi-2-2}\\
& \sum_{k\in[n]} \lambda_k \leq 1 \label{phi-2-3}\\
& \lambda_k \in\{0,1\} & \forall k \in [n] \label{phi-2-4}
\end{align}
By Constraint~\eqref{phi-2-3}, we can only choose one of the candidate attacks represented by $\lambda_k$. Due to Constraint~\eqref{phi-2-2}, we cannot use attack $\lambda_k$ if $\sum_{i\in[k]} w_ix_i > B$. It is easy to see that we can relax the integrality constraints of $\lambda_k$, which gives an LP formulation for $\phi(\pmb{x})$.
By using linear programming duality, we thus can obtain the formulation for the robust problem under budgeted interdiction uncertainty:
\begin{align*}
\min & \sum_{i\in[n]} d_i x_i \\
\text{s.t.} & \sum_{i\in[n]} x_i - \beta \geq p\\
& \sum_{i\in[k]} (w_i \alpha_k - 1) x_i \geq B \alpha_k - \beta & \forall k \in [n] \\
& \alpha_k \geq 0 & \forall k \in [n]\\
& \beta \geq 0 \\
& \pmb{x}\in\X
\end{align*}
This formulation is nonlinear due to the products $(w_i \alpha_k - 1) x_i$. As $\pmb{x}$ is binary, we can linearize the model using $\mu_{ik} = \alpha_k x_i$ for all $k\in[n]$, $i\in[k]$ as follows:
\begin{align*}
\min & \sum_{i\in[n]} d_i x_i  \tag{IP-2}\\ 
\text{s.t.} & \sum_{i\in[n]} x_i - \beta \geq p\\
& \sum_{i\in[k]} (w_i \mu_{ik} - x_i) \geq B \alpha_k - \beta & \forall k \in [n] \\
& \mu_{ik} \leq k x_i & \forall k\in[n], i\in[k]\\
& \mu_{ik} \leq \alpha_k & \forall k\in[n], i\in[k]\\
& \mu_{ik} \geq 0 & \forall k\in[n], i\in[k]\\
& \alpha_k \geq 0 & \forall k \in [n]\\
& \beta \geq 0 \\
& \pmb{x}\in\X
\end{align*}

\subsection{IP-3}

We now consider a third option to model the robust problem. As an alternative formulation for IP-2, here the adversarial problem is obtained by considering the ratio of items weight divided by the budget $B$. Thus the model is as follows:
\begin{align*}
\phi(\pmb{x}) = \max & \sum_{k\in[n]} \Bigg( 1 - \Bigg\lfloor \frac{\sum_{i\in[k]} w_i x_i}{B+\epsilon} \Bigg\rfloor \Bigg) \Bigg( \sum_{i\in[k]} x_i \Bigg) \lambda_k \\
\text{s.t.} & \sum_{k\in[n]} \lambda_k \leq 1 \\
& \lambda_k \geq 0 & \forall k \in [n]
\end{align*}
where $0<\epsilon<1$. Observe that $1 - \lfloor \frac{\sum_{i\in[k]} w_i x_i}{B+\epsilon}\rfloor \le 0$ if and only if $\sum_{i\in[k]}w_ix_i > B$.
Analogously to IP-2, $\phi(\pmb{x})$ represents the highest number of items that can be attacked by the adversary. By using linear programming duality, we can find the following formulation for the robust problem:
\begin{align*}
\min & \sum_{i\in[n]} d_i x_i \\
\text{s.t.} & \sum_{i\in[n]} x_i - \alpha \geq p\\
& \alpha - \Bigg( 1 - \Bigg\lfloor \frac{\sum_{i\in[k]} w_i x_i}{B+\epsilon} \Bigg\rfloor \Bigg) \Bigg( \sum_{i\in[k]} x_i \Bigg) \geq 0 & \forall k \in [n]\\
& \alpha \geq 0 \\
&\pmb{x}\in\X
\end{align*}
The formulation is not linear. To eliminate the floor function, we introduce a new integer variable $y_k$ for $k\in[n]$ with
\begin{align*}
&\Bigg( \frac{\sum_{i\in[k]} w_i x_i}{B+\epsilon} \Bigg) \ge y_k  &\forall k \in [n]
\end{align*}
This leads to products $x_i y_k$ which are linearized using additional variables $z_{ik}$ with
\begin{align*}
&\sum_{i\in[k]} x_i y_k = z_{ik} & \forall k \in [n]
\end{align*}
A compact formulation of the robust problem under knapsack uncertainty is hence as follows:
\begin{align*}
\min & \sum_{i\in[n]} d_i x_i \tag{IP-3}\\
\text{s.t.} & \sum_{i\in[n]} x_i - \alpha \geq p\\
& \sum_{i\in[k]} (x_i - z_{ik}) \leq \alpha & \forall k \in [n] \\
& z_{ik} \leq y_k & \forall k\in[n], i\in[k] \\
& z_{ik} \leq \Bigg\lfloor \frac{\sum_{j\in[n]} w_j}{B+\epsilon} \Bigg\rfloor x_i & \forall k\in[n], i\in[k] \\
& (B+\epsilon)y_k \leq \sum_{i\in[k]} w_i x_i & \forall k\in[n] \\
& z_{ik} \geq 0 & \forall k\in[n], i\in[k] \\
& \alpha \geq 0 \\
& \pmb{y} \in \mathbb{Z}^n \\
&\pmb{x}\in\X
\end{align*}

\subsection{IP-4}

In the following, we assume that $\pmb{w}$ is integer.
Let $W_k(\pmb{x})$ be the smallest required weight to attack at least $k$ items of $\pmb{x}$. If this is not possible then $\sum_{i\in[n]} x_i < k$ and we set $W_k(\pmb{x})=\infty$. To calculate this value, consider the following selection problem:
\begin{align*}
W_k(\pmb{x}) = \min\ & \sum_{i\in[n]} w_i z_i \\
\text{s.t. } & \sum_{i\in[n]} z_i \ge k \\
& z_i \le x_i & \forall i\in[n] \\
& \pmb{z} \in\{0,1\}^n
\end{align*}
where we define the minimum over an empty set to be infinity. This allows us to reformulate $\phi(\pmb{x})$ as a minimization problem in the following way:
\[ \phi(\pmb{x}) = \min\left\{\sum_{k\in[n]} y_k : (B+1)y_k + W_k(\pmb{x}) \ge B + 1\ \forall k\in[n], \pmb{y}\in\{0,1\}^n\right\} \]
where $\sum_{k\in[n]} y_i$ represent the maximum number of items that can be attacked by the adversary. Therefore, if $W_k(\pmb{x}) \le B$, we need to set $y_k=1$ to have the constraint fulfilled; otherwise, we can choose $y_k=0$. As $\phi(\pmb{x})$ is expressed as a minimization problem, the robust problem becomes
\begin{align*}
\min\ &\sum_{i\in[n]} d_i x_i \\
\text{s.t. } & \sum_{i\in[n]} x_i \ge p + \sum_{k\in[n]} y_k \\
& (B+1)y_k + W_k(\pmb{x}) \ge B + 1 & \forall k\in[n] \\
& \pmb{y}\in\{0,1\}^n \\
& \pmb{x}\in\X
\end{align*}
To replace $W_k(\pmb{x})$, note that we can relax variables $z_i$ without changing the value of the corresponding minimization problem. This means that we can use linear programming duality again to arrive at the following problem formulation:
\begin{align}
\min & \sum_{i\in[n]} d_i x_i \label{ip41}\\
\text{s.t.} & \sum_{i\in[n]} x_i \geq p + \sum_{k\in[n]} y_k \label{ip42}\\
& (B + 1) y_k + k\alpha^k - \sum_{i\in[n]} \beta^k_ix_i \geq B +1 & \forall k \in [n] \label{ip43}\\
& \alpha^k - x_i \beta^k_i \leq w_i & \forall k\in[n], i\in[n] \label{ip44}\\
& \alpha^k \geq 0 & \forall k\in[n] \label{ip45}\\
& \beta^k_i \geq 0 & \forall k\in[n], i\in[n] \label{ip46} \\
& \pmb{y}\in\{0,1\}^n \\
& \pmb{x}\in\X
\end{align}
To avoid the non-linearity between $\beta^k_i$ and $x_i$, we increase the weight $w_i$ sufficiently far if $x_i=0$ so that it will not be part of an attack within the available budget $B$. That is, we replace Constraint~\eqref{ip44} with
\[ \alpha^k - \beta^k_i \leq w_i + (B+1)(1-x_i) \ \forall k\in[n], i\in[n] \]
and Constraint~\eqref{ip43} with
\[ (B + 1) y_k + k\alpha^k - \sum_{i\in[n]} \beta^k_i \geq B +1 \ \forall k \in [n] \]
The resulting model is called (IP-4).

\subsection{IP-5}

In this final model, we again make the assumption that weights $\pmb{w}$ are integer. For fixed $k\in[n]$, consider the constraint
\[(B+1)(1-x_k+y_k) \ge B + 1 - \sum_{i\in[k]} w_i x_i \]
with binary variable $y_k$, which shows the attack of the adversary.
As the adversary will attack items in the order from $1$ to $n$ if possible, the term $\sum_{i\in[k]} w_ix_i$ gives the required budget to attack all items that $\pmb{x}$ packed up to and including item $k$. This means that if $x_k=1$ and $\sum_{i\in[k]} w_ix_i \le B$, then item $k$ will be attacked. In the constraint, this corresponds to the case that $(B+1)y_k \ge B+1 - W$ with $W\le B$, so $y_k=1$ is the only feasible choice. On the other hand, if $x_k=0$ or $\sum_{i\in[k]} w_ix_i \ge B+1$, it is possible to set $y_k=0$.

This discussion shows that the following compact formulation for the robust problem with budgeted interdiction uncertainty is correct:
\begin{align*}
\min & \sum_{i\in[n]} d_i x_i \tag{IP-5}\\
\text{s.t.} & \sum_{i\in[n]} x_i \geq p + \sum_{k\in[n]} y_k \\
& (B+1)(1-x_k+y_k) \ge B + 1 - \sum_{i\in[k]} w_i x_i & \forall k \in [n] \\
& \pmb{y}\in\{0,1\}^n \\
& \pmb{x}\in\X
\end{align*}

\subsection{Model Comparison}

In Table~\ref{tab:comparison}, we summarize the five proposed models.

\begin{table}[htb]
	\begin{center}
		\begin{tabular}{|r|r|r|r|r|}
			\hline
			Model & \; \# Cont. V. & \; \# Disc. V. & \; \# Con. & \; Requirements \\
			\hline 
			IP-1 & $0$ & $0$ & 0 & $p=1$, $\pmb{w}\in\mathbb{Z}^n_+$ \\[1.2ex]
			IP-2 & $\frac{1}{2}n^2 + \frac{3}{2}n+1$ & 0 & $n^2+2n$ &  \\[1.2ex]
			IP-3 & $\frac{1}{2}n^2 + \frac{1}{2}n+1$ & $n$ & $n^2+3n$ & \\[1.2ex]
			IP-4 & $n^2+n$ & $n$ & $n^2+n$ & $\pmb{w}\in\mathbb{Z}^n_+$ \\[1.2ex]
			IP-5 & 0 & $n$ & $n$ & $\pmb{w}\in\mathbb{Z}^n_+$ \\
			\hline
		\end{tabular}
		\caption{Comparison of model sizes.}\label{tab:comparison}
	\end{center}
\end{table}

Column ``\# Cont. V.'' gives the additional number of continuous variables, while ``\# Disc. V.'' gives the additional number of discrete variables in the model (beyond variables $\pmb{x}\in\X$. Column ``\# Con.'' shows the additional number of constraints that are created in comparison to the nominal model.

Some models require $\pmb{w}$ to be integer. This property can be replaced with the more general requirement that we need to be able to determine some value $\epsilon>0$ in polynomial time such that for each $\pmb{c}\in\{0,1\}^n$, either $\pmb{w}^t\pmb{c}\le B$ or $\pmb{w}^t\pmb{c} \ge B+\epsilon$ holds. For integer weights, $\epsilon=1$ clearly fulfills this property.

Having five candidate models available (four of which are general), a natural question is to compare the strengths of their respective linear programming relaxations: Is it possible that one model dominates the other in the sense that any feasible solution of the linear programming relaxation of the first model corresponds to a feasible solution of the linear programming relaxation of the second model with the same objective value? As we show experimentally in the next section, this is not the case. For any pairwise comparison of models, there are instances where the LP relaxation of one model outperforms the LP relaxation of the other.

\section{Experiments}
\label{sec:experiments}

In this section we check the performance of our IPs to solve three different problems under budgeted interdiction uncertainty. We start with the selection problem as the models are obtained with respect to this problem. We also show the results of our approaches applied to the job assignment and 2-edge-connected subgraph problem.

To evaluate solution times for each problem, and in addition to presenting the normal solution times, we use performance profiles as introduced in \cite{dolan2002benchmarking}. We briefly recall this concept: Let $\cS$ be the set of considered models, $\cK$ the set of instances and $t_{k,s}$ the runtime of model $s$ on instance $k$. We assume $t_{k,s}$ is set to infinity (or large enough) if model $s$ does not solve instance $k$ within the time limit. The percentage of instances for which the performance ratio of solver $s$ is within a factor $\tau \geq 1$ of the best ratio of all solvers is given by:
\[k_s (\tau) = \frac{1}{\lvert \cK \rvert} \; \Bigg\lvert \Bigg\{ k\in \cK \; \vert \; \frac{t_{k,s}}{\text{min}_{\hat{s}\in \cS} t_{k,\hat{s}}}  \leq \tau \Bigg\} \Bigg \lvert  \]
Hence, the function $k_s$ can be viewed as the distribution function for the performance ratio, which is plotted in a performance profile for each
model.

For all experiments of the selection problem, we use CPLEX version 12.8 on an Intel Xeon Gold 6154 CPU computer server running at 3.00GHz with 754 GB RAM. For the experiments of both the job assignment and 2-edge-connected subgraph problem we use CPLEX version 22.11 on an Intel pc-i440fx-7.2 CPU computer server running at 2.00GHz with 15 GB RAM. All processes are restricted to one thread.

\subsection{Selection}

\subsubsection{Setup}

We conduct three types of experiments on robust selection problems of the type
\[ \min \left\{ \sum_{i\in[n]} d_i x_i : \sum_{i\in[n]} x_i \ge p,\ \pmb{x}\in\{0,1\}^n \right\} \]
with budgeted interdiction uncertainty in the cardinality constraint.
In the first experiment, we compare the tightness of the lower bounds obtained by all five models when the size of $n$ and $p$ is fixed. The remaining experiments compare the solution times of the proposed models.

In the second experiment, we vary the size of $n$. This is done in two ways: once for a fixed value of $p$, and once for a value of $p$ that increases with $n$. In the third experiment, we fix $n$ and vary only the size of $p$. This way, we can thoroughly evaluate the effect these two parameters have on the performance of the models.

We use two techniques to generate instances for the selection problem under budgeted interdiction uncertainty, called \texttt{Gen-1} and \texttt{Gen-2}. In \texttt{Gen-1}, the weights are chosen independently from the corresponding costs, which meanst that there may be both particularly good items (with low $c_i$ and high $w_i$) and bad items. In \texttt{Gen-2}, the weight of items depend on their costs, which intuitively may lead to harder instances compared to \texttt{Gen-1}. The generation methods are considered as follows:
\begin{itemize}
\item \textbf{\texttt{Gen-1}:} for each $i\in[n]$ we choose $d_i, w_i$ from $\{1,\ldots,100\}$ independently random uniform
\item \textbf{\texttt{Gen-2}:} for each $i\in[n]$ the value of $w_i$ depends on the value of $d_i$, thus we choose $d_i$ from $\{1,\ldots,100\}$ and $w_i$ from $\{\max(1,d_i-5),\ldots,\min(100 , d_i+5)\}$ randomly uniform
\end{itemize}
In both cases, we set
\[B = \Bigg\lfloor \frac{\sum_{i\in[n]} w_i}{4} \Bigg\rfloor \]

\subsubsection{Experiment 1}
\label{subsec:experiment-1}

Here we focus on the LP-relaxation of all models and compare the lower bounds obtained by each of them. In this experiment we fix $n=10$ and $p=1$ to include all models. We solve the LP relaxations of 1000 instances for each combination of generation and solution methods using CPLEX. We then perform a pairwise comparison of the resulting lower bounds.

The results of this experiment is presented in Tables~\ref{table-lb-comparison-Gen-1} and \ref{table-lb-comparison-Gen-2} for \texttt{Gen-1} and \texttt{Gen-2}, respectively. Each number shows how many times the method in the respective row provided a strictly better (in this case higher) lower bound than the model in the correspondence column. The last column shows the average of cases per row where the model has been better then the comparison model in percent.
\begin{table}[htbp]
	\begin{center}
		\begin{tabular}{c|ccccc|c}
			      & IP-1 & IP-2 & IP-3 & IP-4 & IP-5 & $\%$\\
			\hline
			IP-1 & \textemdash & 979 & 967 & 993 & 983 & 98.05 \\
% 			\hline
			IP-2 & 21 & \textemdash & 5 & 368 & 92 & 12.15\\
% 			\hline
			IP-3 & 33 & 982 & \textemdash & 878 & 412 & 57.63\\
% 			\hline
			IP-4 & 7 & 628 & 118 & \textemdash & 9 & 19.05\\
% 			\hline
			IP-5 & 17 & 898 & 575 & 987 & \textemdash & 61.93
		\end{tabular}
		\caption{LB comparison (\texttt{Gen-1}).}\label{table-lb-comparison-Gen-1}
	\end{center}
\end{table}

Based on the information provided in Table~\ref{table-lb-comparison-Gen-1} for \texttt{Gen-1}, we note that IP-1 dominates all other models in over 98 percent of cases (which is not surprising, as it is the most specialized model). The next best model is IP-5, followed by IP-3. With some gap behind these two models follow IP-4 and IP-2. The weakest model, IP-2, is stronger than another model in only around 12 percent of cases.
 
\begin{table}[htbp]
	\begin{center}
		\begin{tabular}{c|ccccc|c}
			& IP-1 & IP-2 & IP-3 & IP-4 & IP-5 & $\%$\\
			\hline
			IP-1 & \textemdash & 1000 & 999 & 1000 & 1000 & 99.98\\
% 			\hline
			IP-2 & 0 & \textemdash & 0 & 999 & 586 & 39.63\\
% 			\hline
			IP-3 & 1 & 1000 & \textemdash & 1000 & 1000 & 75.03\\
% 			\hline
			IP-4 & 0 & 1 & 0 & \textemdash & 0 & \phantom{0}0.03\\
% 			\hline
			IP-5 & 0 & 414 & 0 & 1000 & \textemdash & 35.35
		\end{tabular}
		\caption{LB comparison (\texttt{Gen-2}).}\label{table-lb-comparison-Gen-2}
	\end{center}
\end{table}

Interestingly, this ordering changes when using instances of type \texttt{Gen-2}, see Table~\ref{table-lb-comparison-Gen-2}. While IP-1 still outperforms other models other models in nearly all cases (over 99 percent), the second best model is IP-3, which performs relatively better than before. Similarly, IP-2 has improved in the ranking, while IP-5 (which was the second best choice in Table~\ref{table-lb-comparison-Gen-1} is now relegated to fourth place. IP-4 can provide a better bound than another model in only one single instance.

\subsubsection{Experiment 2}
\label{subsec:experiment-2}

In this experiment, we vary the problem size in $n\in\{20,25,\ldots,100\}$.
The experiment is divided into two parts. In the first part, we fix $p$ to $1$ so that all solution methods could be included, and also consider $p=5$ to compare the performance of solution methods which can be applied to cases where $p>1$. In the second part, we use $p=\frac{n}{5}$ so that $p$ grows linearly in $n$. For each combination of generation and solution methods we solved 50 instances using CPLEX and with a 600 second time limit. We always present a plot of average solution times and a performance profile.

\begin{figure}[htbp]
	\begin{center}
		\subfigure[\texttt{Gen-1}]{\includegraphics[width=0.45\textwidth]{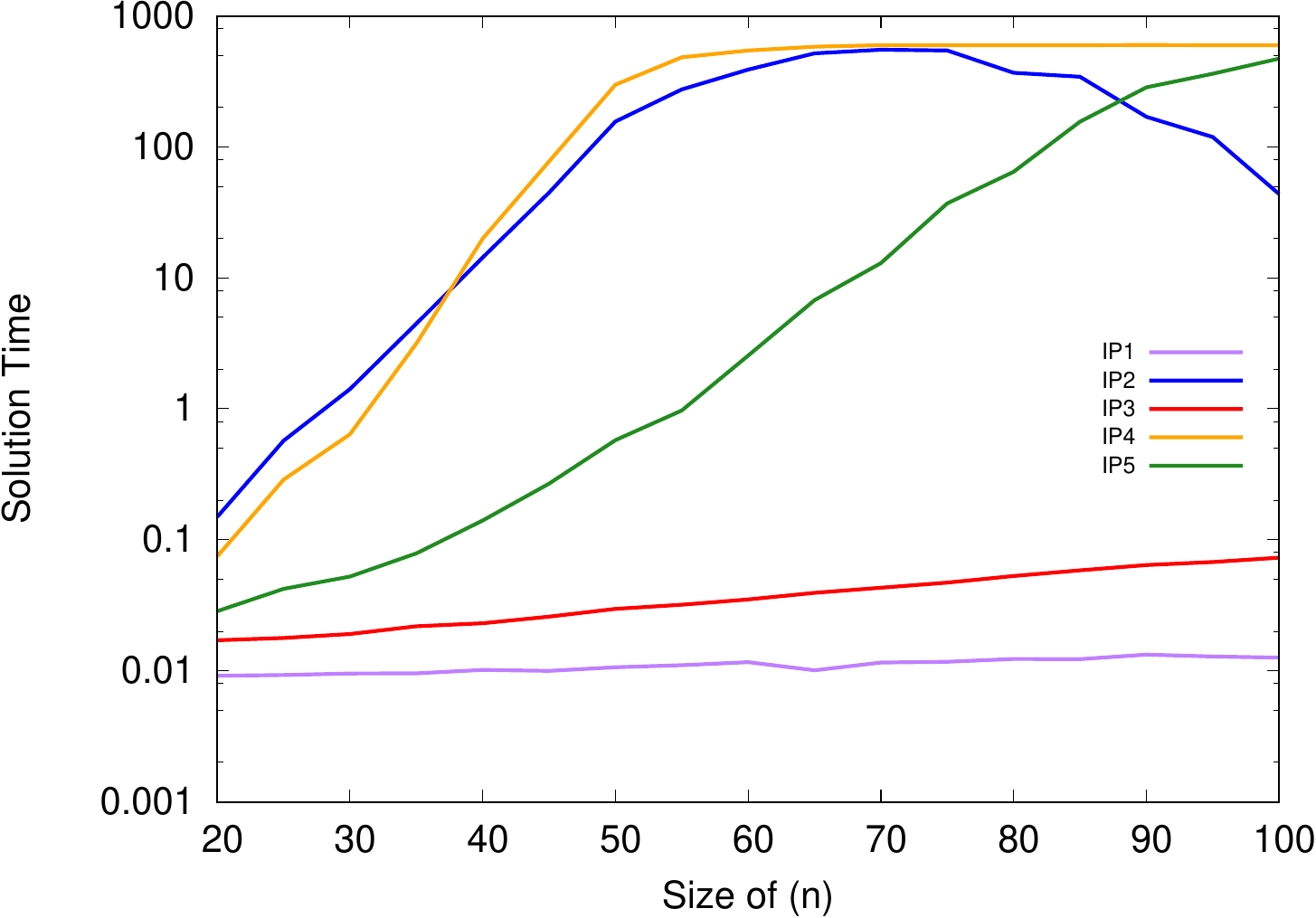}}
		\subfigure[\texttt{Gen-2}]{\includegraphics[width=0.45\textwidth]{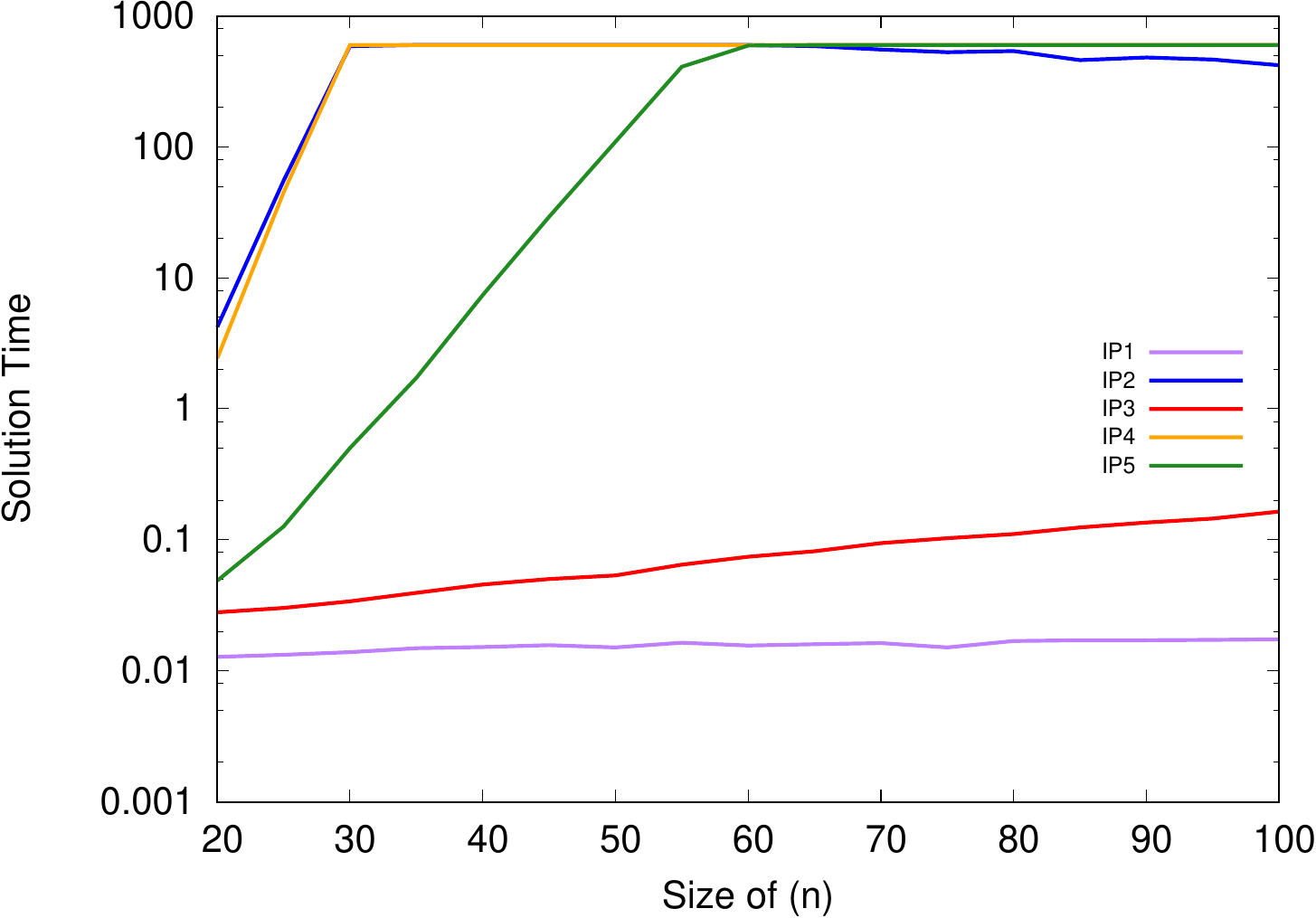}}
	\end{center}
	\caption{Selection - Exp2 - Solution times ($p=1$)}\label{exp2-part1-p1-time}
\end{figure}

\begin{figure}[htbp]
	\begin{center}
		\subfigure[\texttt{Gen-1}]{\includegraphics[width=0.45\textwidth]{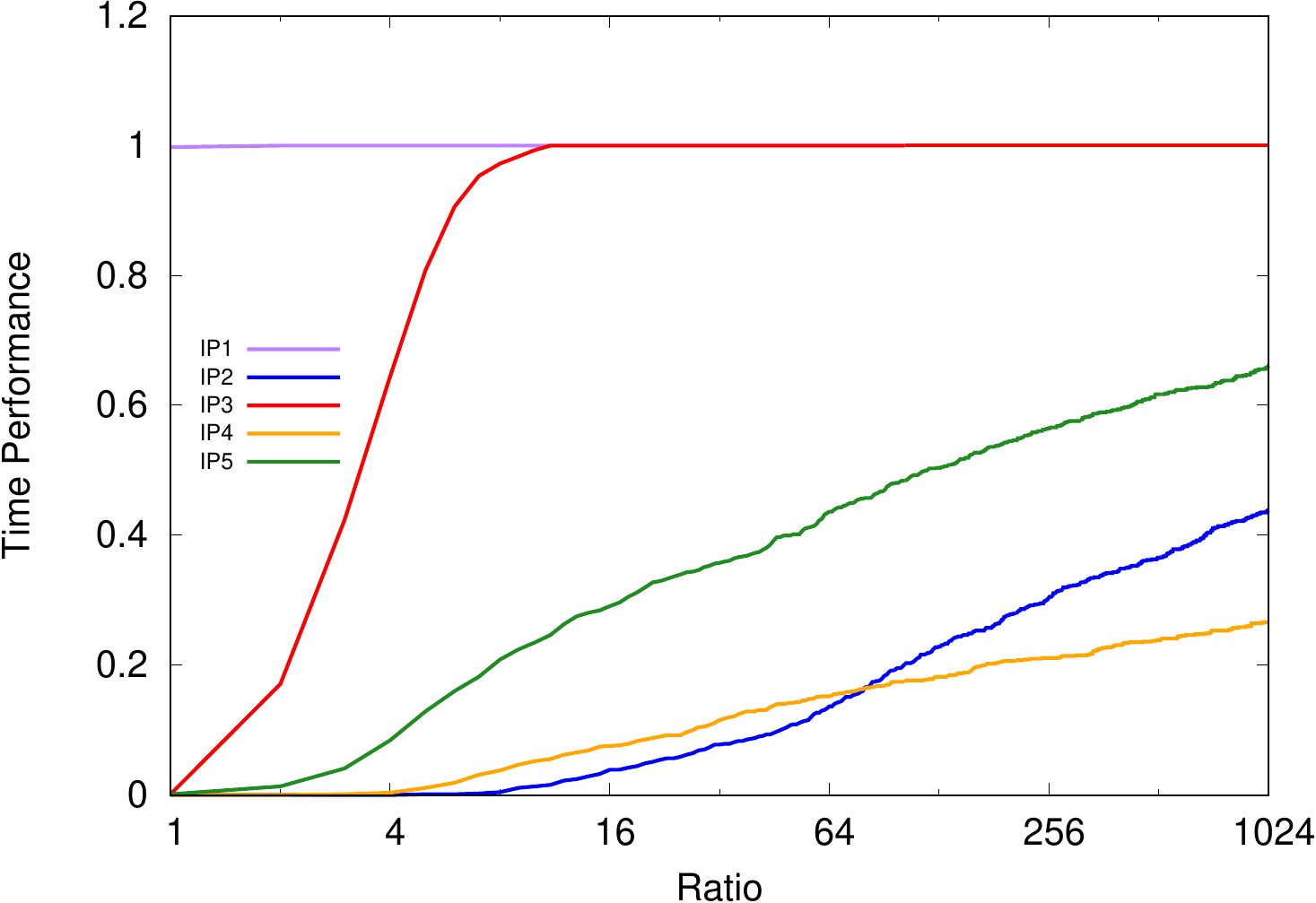}}
		\subfigure[\texttt{Gen-2}]{\includegraphics[width=0.45\textwidth]{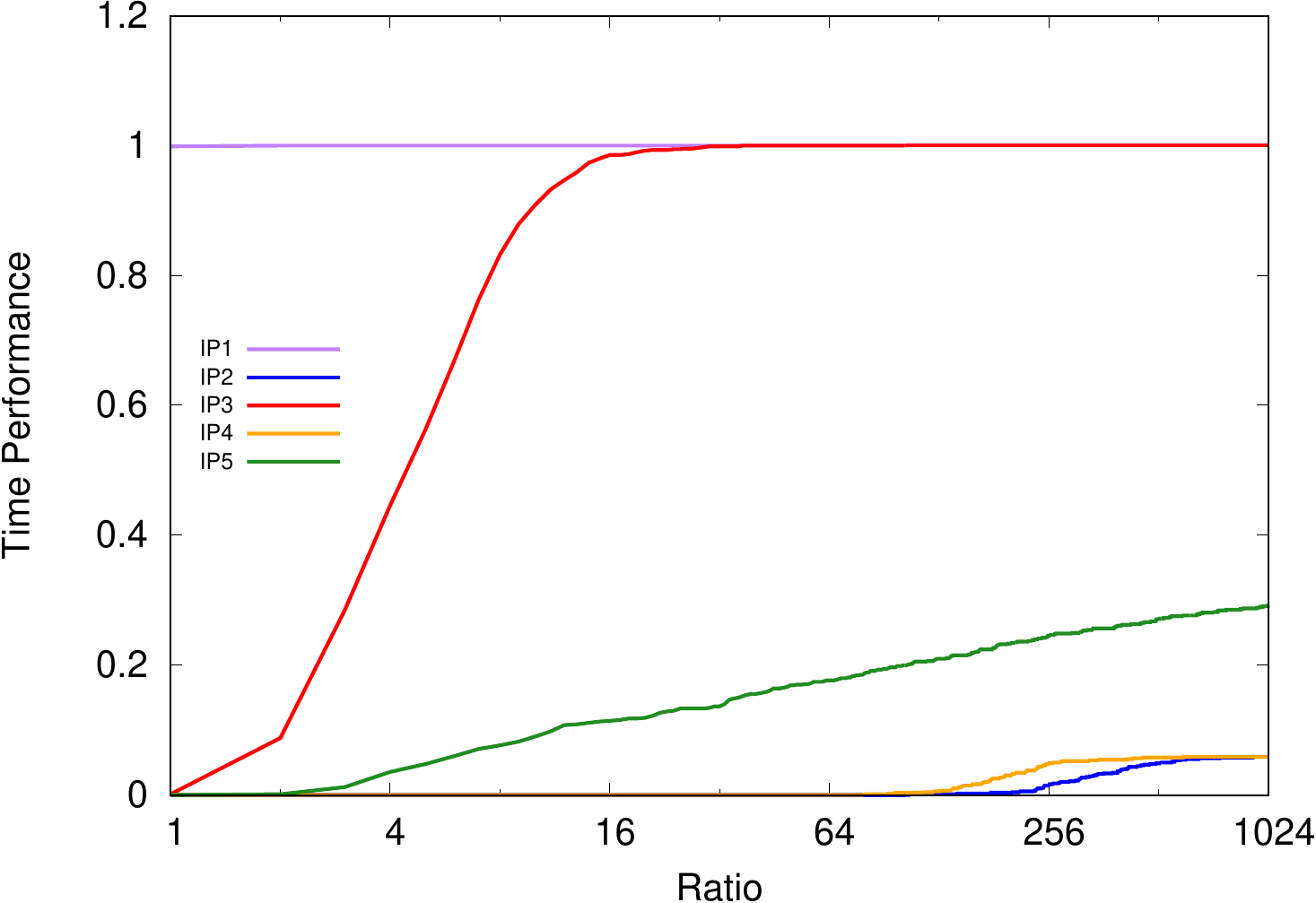}}
	\end{center}
	\caption{Selection - Exp2 - Performance profiles ($p=1$)}\label{exp2-part1-p1-pp}
\end{figure}

Figure~\ref{exp2-part1-p1-time} shows the solution times for $p=1$. Clearly, IP-1 is the fastest model to solve instances for both \texttt{Gen-1} and \texttt{Gen-2}, followed by IP-3. Other models also show similar behavior for both generation methods. Interestingly, IP-2 may even become faster as $n$ increases, which seems counterintuitive, but can be explained by the fact that $p$ remains constant. The performance profiles (see Figure~\ref{exp2-part1-p1-pp}) reflect a similar relative performance of the five models.

\begin{figure}[htbp]
	\begin{center}
		\subfigure[\texttt{Gen-1}]{\includegraphics[width=0.45\textwidth]{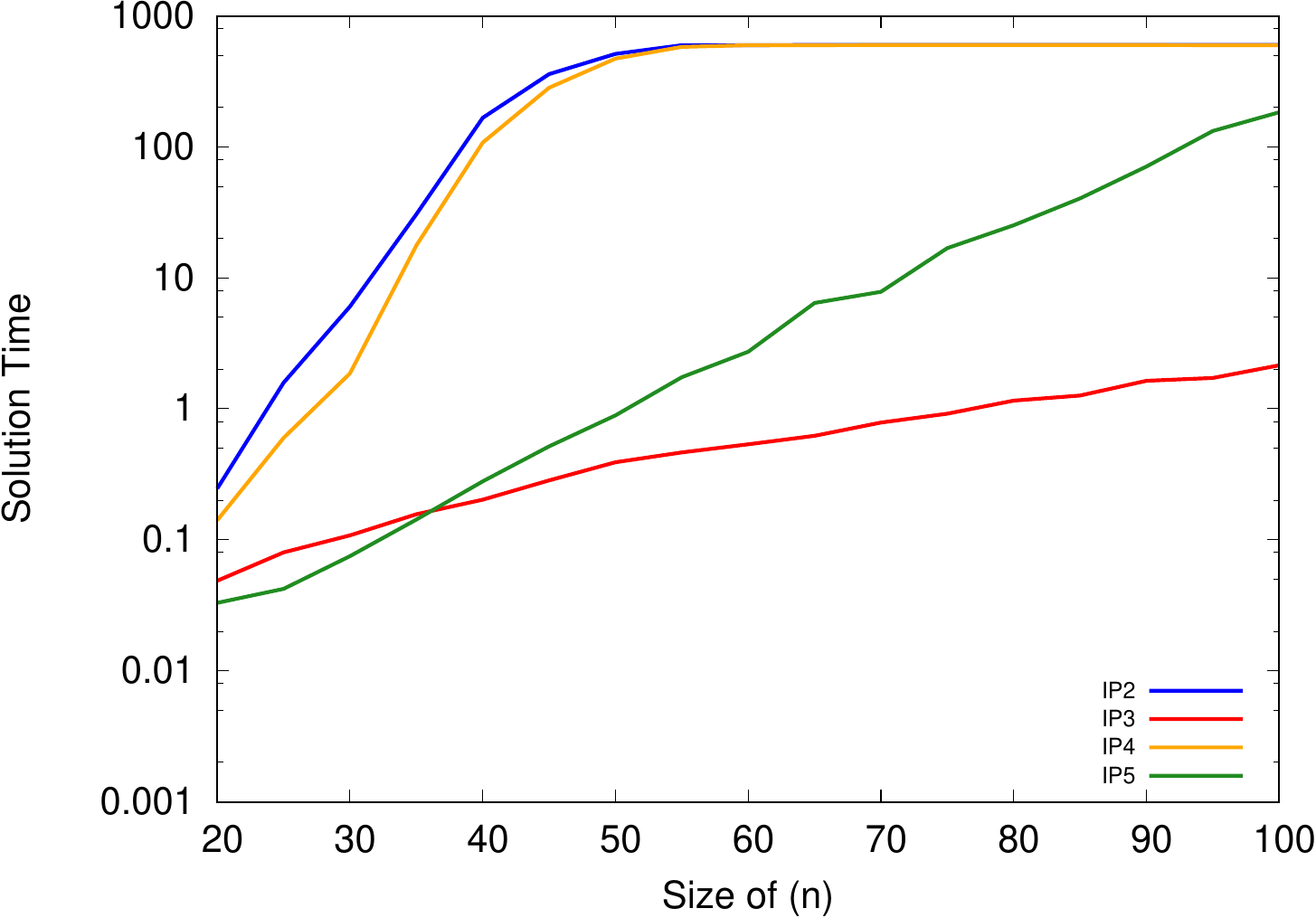}}
		\subfigure[\texttt{Gen-2}]{\includegraphics[width=0.45\textwidth]{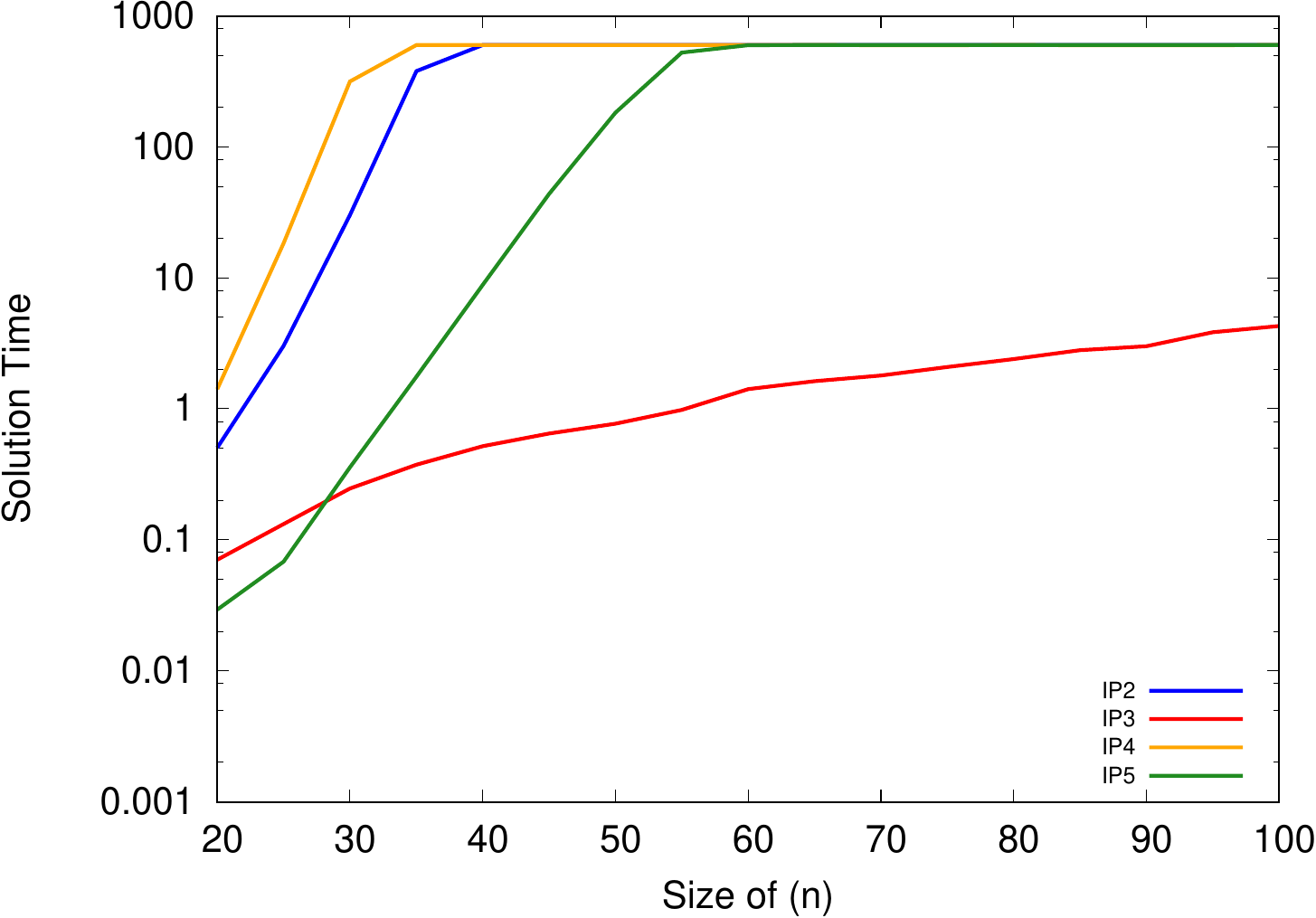}}
	\end{center}
	\caption{Selection - Exp2 - Solution times ($p=5$)}\label{exp2-part1-p5-time}
\end{figure}

\begin{figure}[htbp]
	\begin{center}
		\subfigure[\texttt{Gen-1}]{\includegraphics[width=0.45\textwidth]{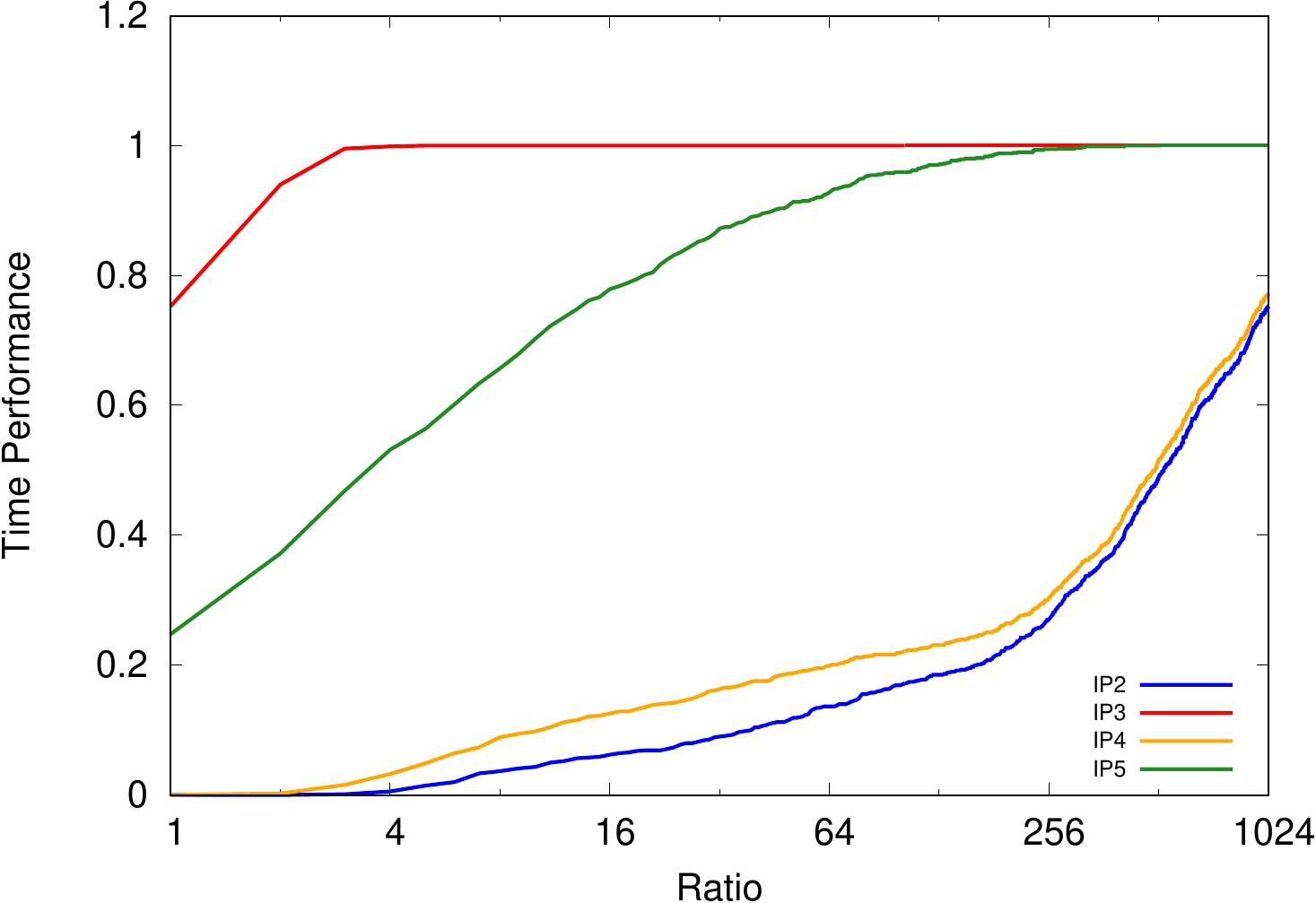}}
		\subfigure[\texttt{Gen-2}]{\includegraphics[width=0.45\textwidth]{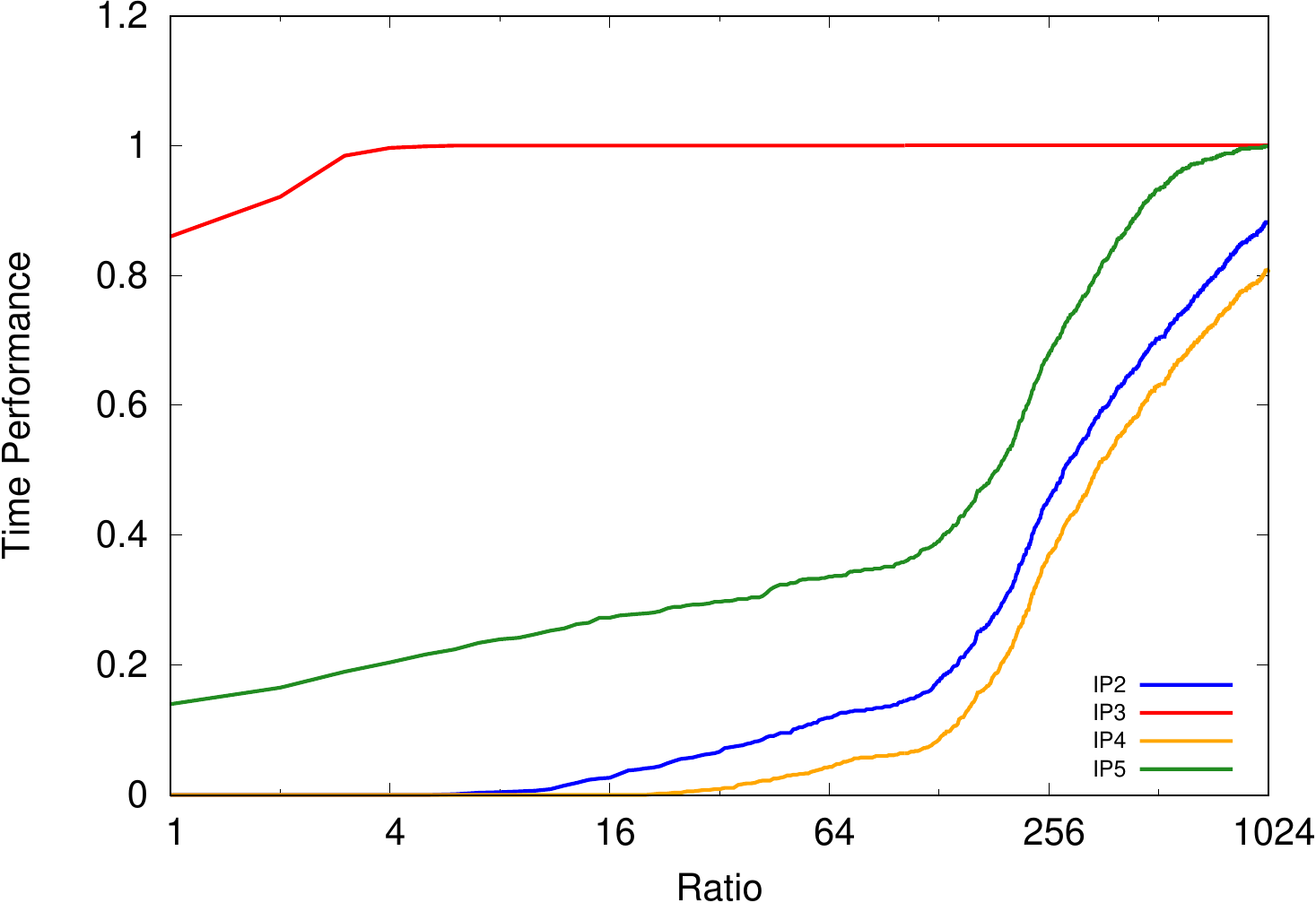}}
	\end{center}
	\caption{Selection - Exp2 - Performance profiles ($p=5$)}\label{exp2-part1-p5-pp}
\end{figure}

In Figures~\ref{exp2-part1-p5-time} and \ref{exp2-part1-p5-pp}, we show the average solution times and performance profiles for the case $p=5$, where IP-1 is not included. A similar behavior to the cases when $p=1$ can be seen. As IP-1 is excluded, here IP-3 has the best average solution time. The difference is that IP-3 fails to be faster than IP-5 for instances with smaller size of $n$. Similarly, in the performance profile, IP-3 dominates other models. The main difference compared to the case with $p=1$ is that none of the models is always superior to others.
 
We now consider the case $p=\frac{n}{5}$, i.e., $p$ grows linearly with $n$. The average solution times and performance profiless of this experiment are presented in Figures~\ref{exp2-part2-time} and \ref{exp2-part2-pp}, respectively.

\begin{figure}[htbp]
	\begin{center}
		\subfigure[\texttt{Gen-1}]{\includegraphics[width=0.45\textwidth]{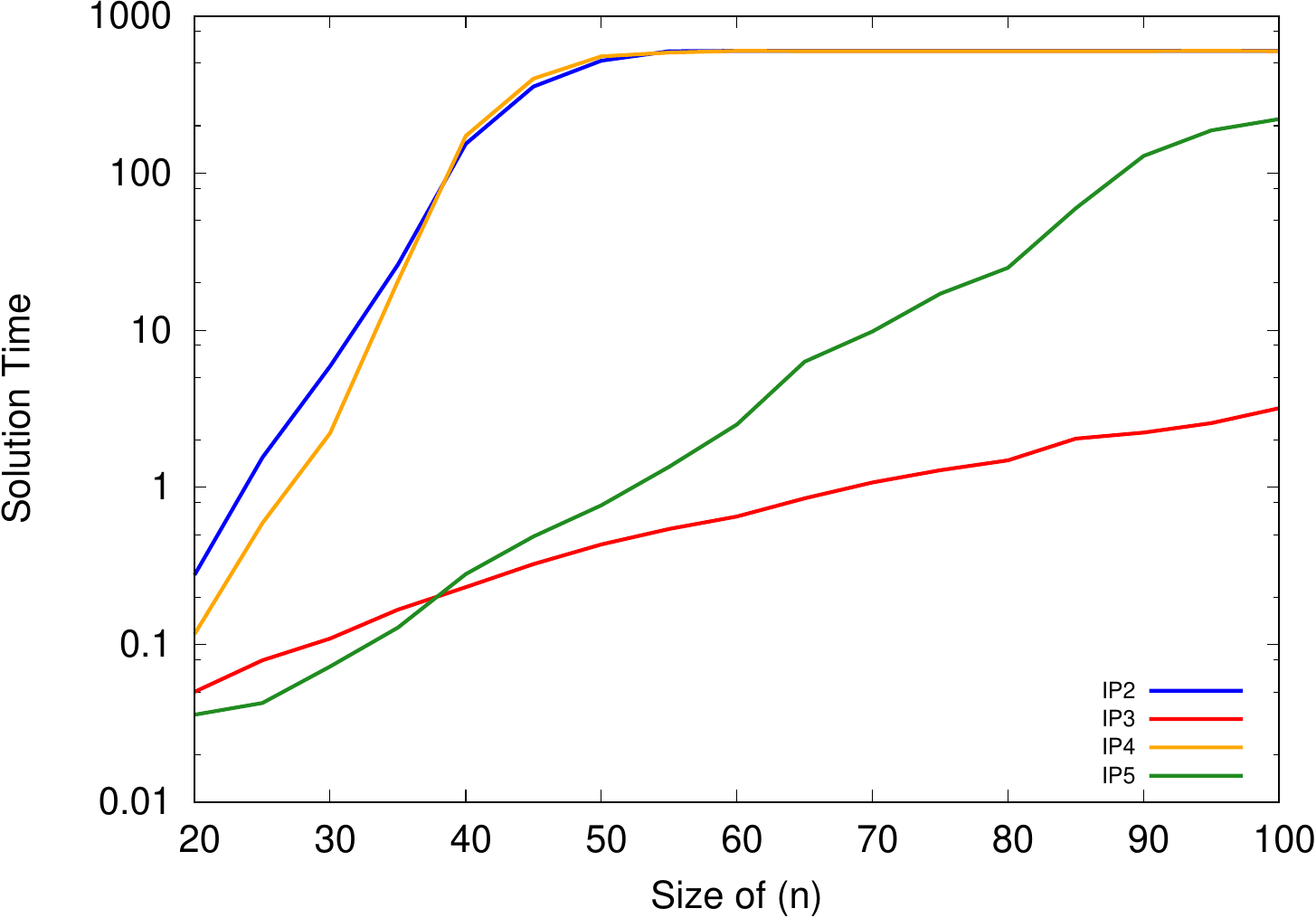}}
		\subfigure[\texttt{Gen-2}]{\includegraphics[width=0.45\textwidth]{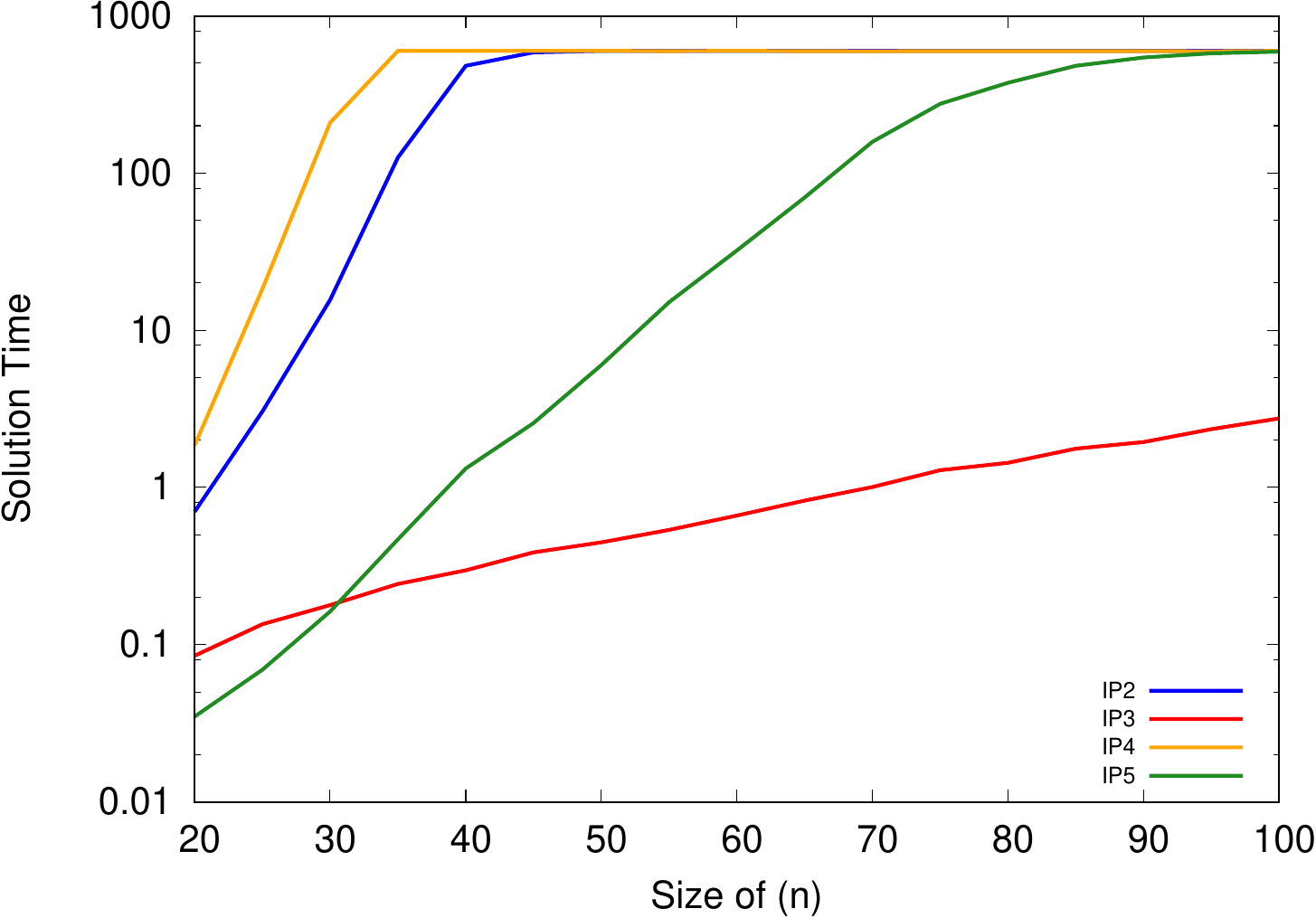}}
	\end{center}
	\caption{Selection - Exp2 - Solution times ($p=n/5$)}\label{exp2-part2-time}
\end{figure}

\begin{figure}[htbp]
	\begin{center}
		\subfigure[\texttt{Gen-1}]{\includegraphics[width=0.45\textwidth]{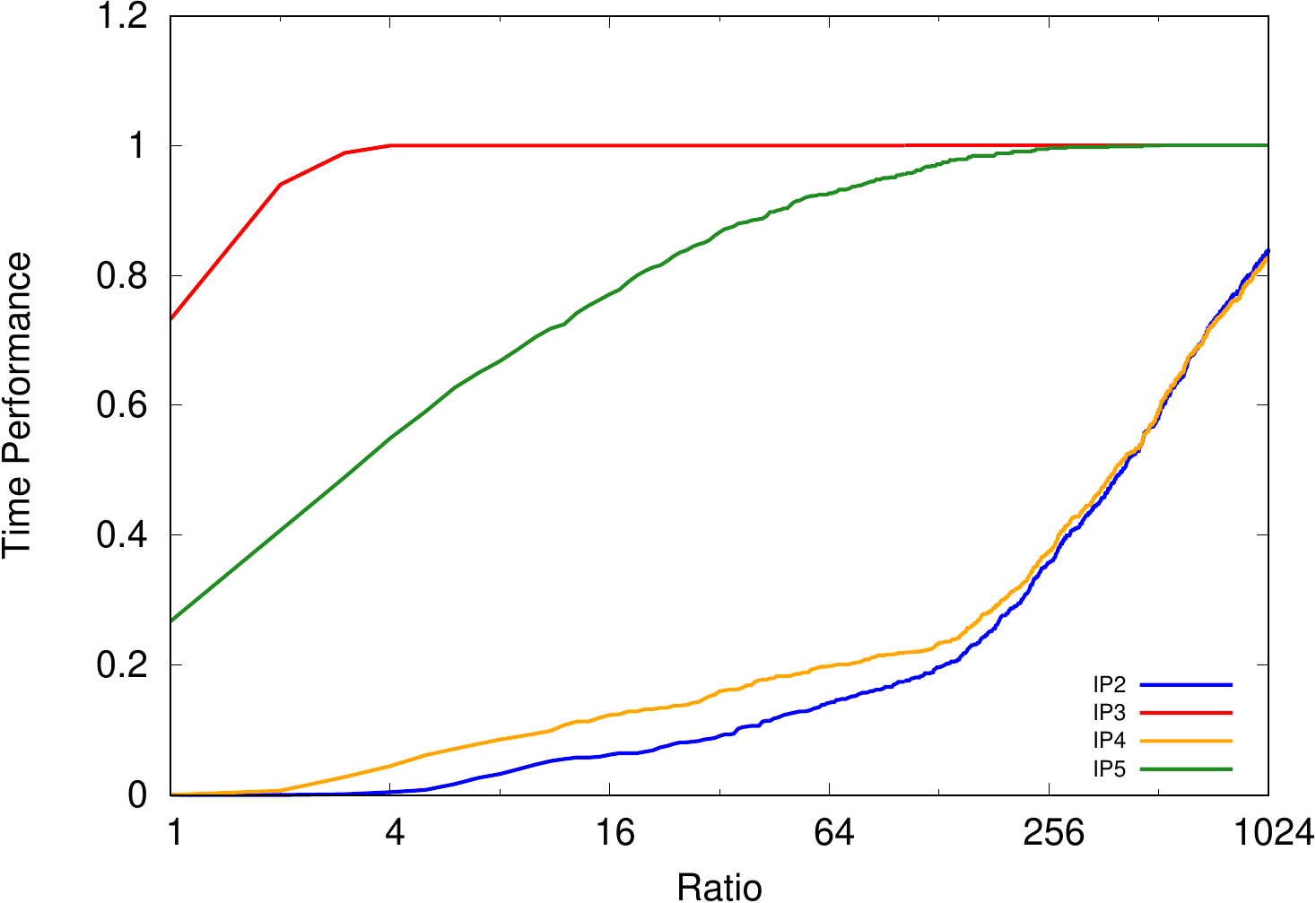}}
		\subfigure[\texttt{Gen-2}]{\includegraphics[width=0.45\textwidth]{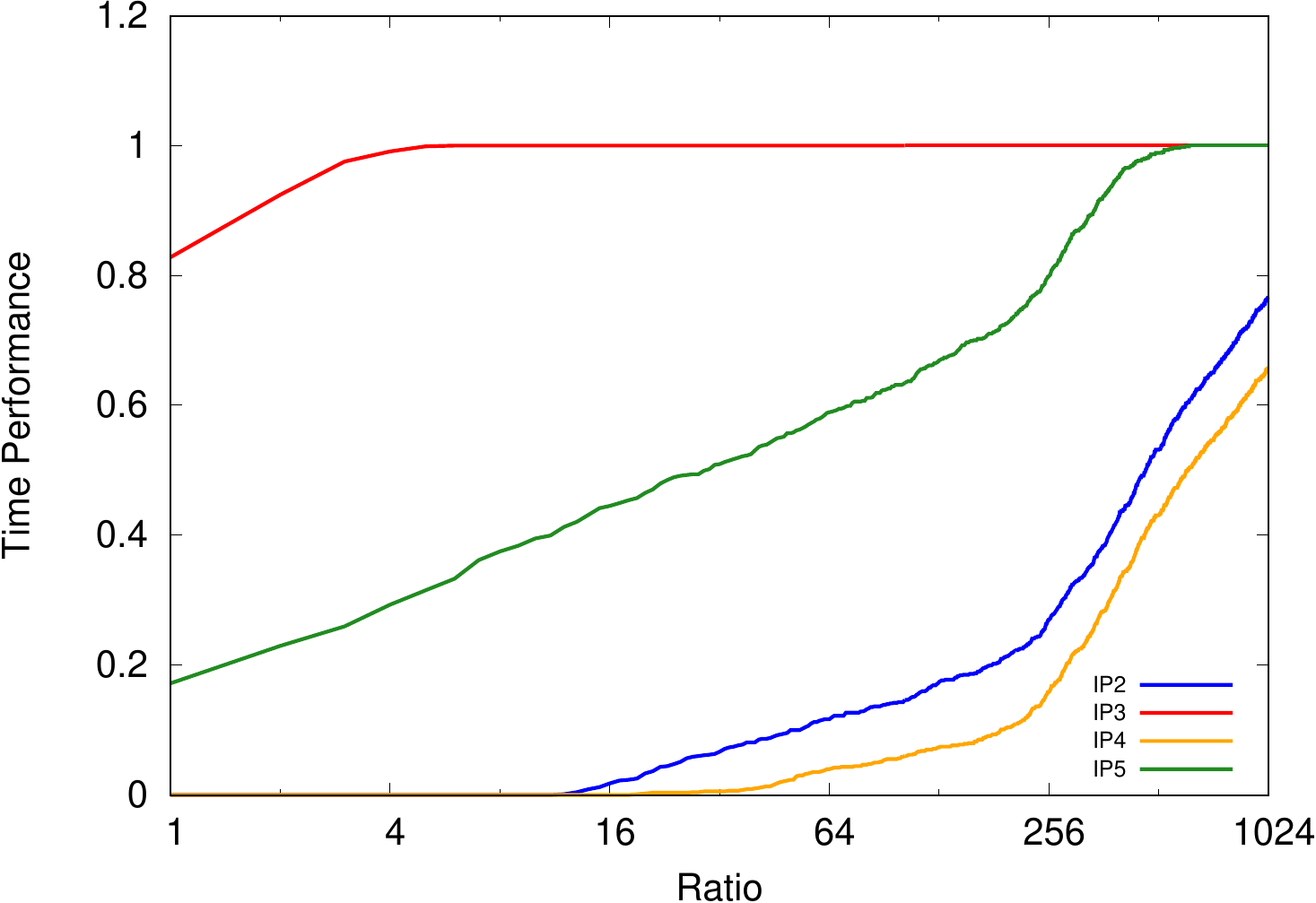}}
	\end{center}
	\caption{Selection - Exp2 - Performance profiles ($p=n/5$)}\label{exp2-part2-pp}
\end{figure}

The results of the second part is closely similar to the first part results when $p=5$. That is, IP-3 beats all formulations except IP-5 with $n=20,30,40$ for \texttt{Gen-1} and $n=20,30$ for \texttt{Gen-2}. In addition, the behavior of IP-2 and IP-4 is similar, however their comparison is difficult because of the time limit. In this experiment, IP-4 has better solution time than IP-2 for \texttt{Gen-1}, but for \texttt{Gen-2}, IP-2 is slightly faster than IP-4.

\subsubsection{Experiment 3}
\label{subsec:experiment-3}

In this experiment, we fix the number of items $n$ and change the number of items we want to select $p$. We consider the cases $n=20$ and $n=40$. In both cases, $p$ is chosen from $\{1,2,\ldots,10\}$. As before, we solved 50 instances using CPLEX with a 600-second time limit for each combination of generation and solution methods. The results of this experiment is provided in Figures~\ref{exp3-n20-time}-\ref{exp3-n40-pp}.

\begin{figure}[htbp]
	\begin{center}
		\subfigure[\texttt{Gen-1}]{\includegraphics[width=0.45\textwidth]{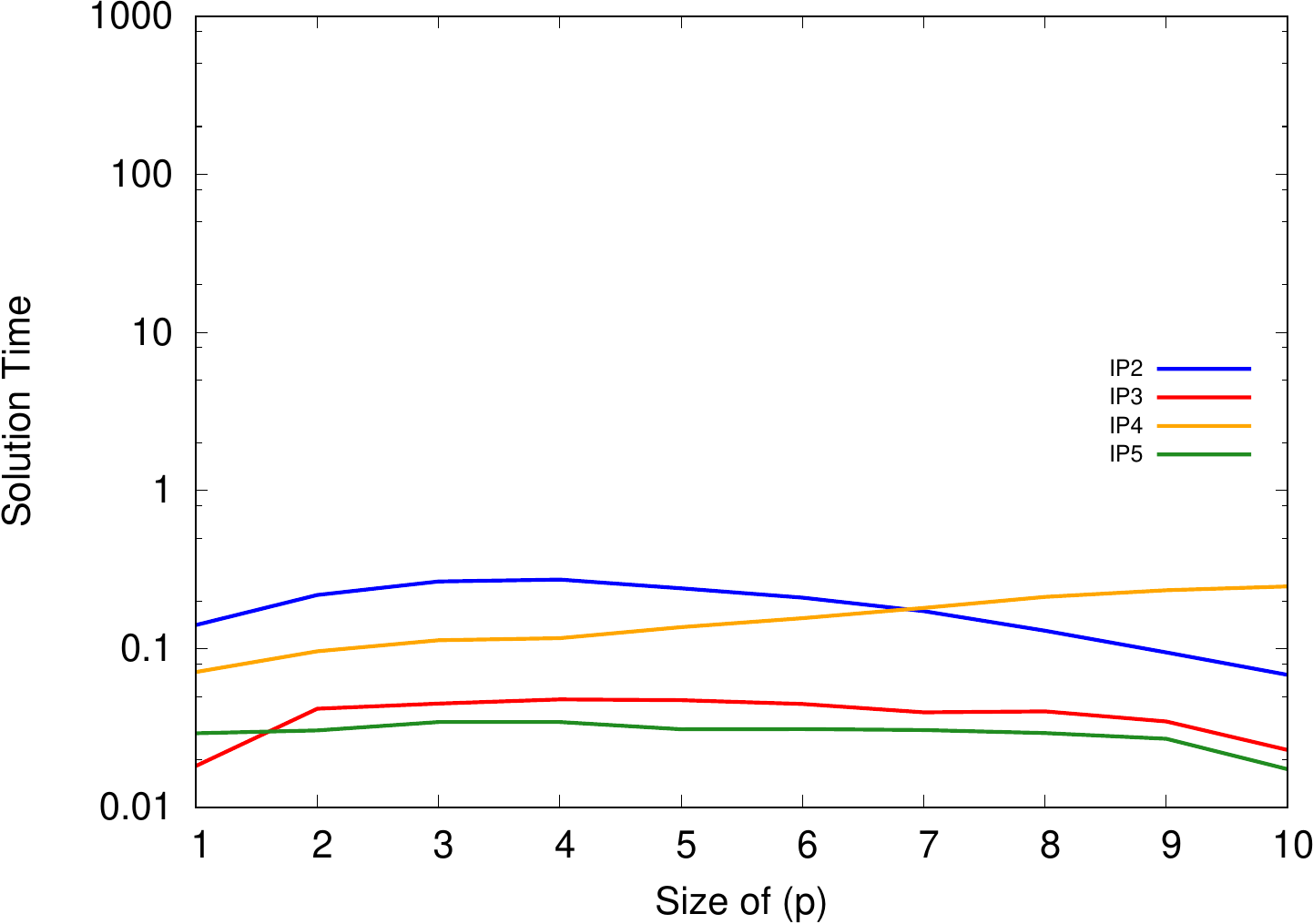}}
		\subfigure[\texttt{Gen-2}]{\includegraphics[width=0.45\textwidth]{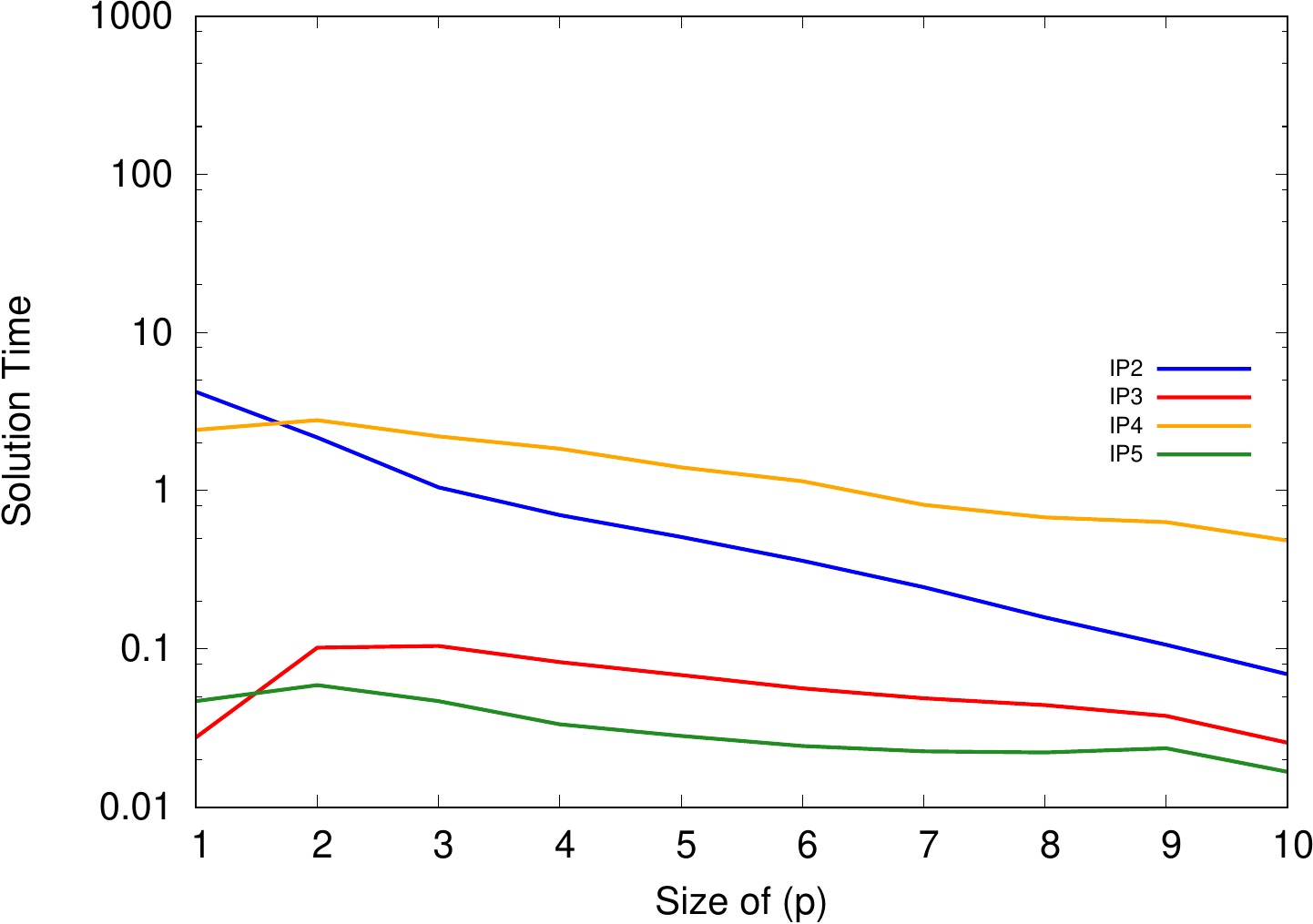}}
	\end{center}
	\caption{Selection - Exp3 - Solution times ($n=20$)}\label{exp3-n20-time}
\end{figure}

\begin{figure}[htbp]
	\begin{center}
		\subfigure[\texttt{Gen-1}]{\includegraphics[width=0.45\textwidth]{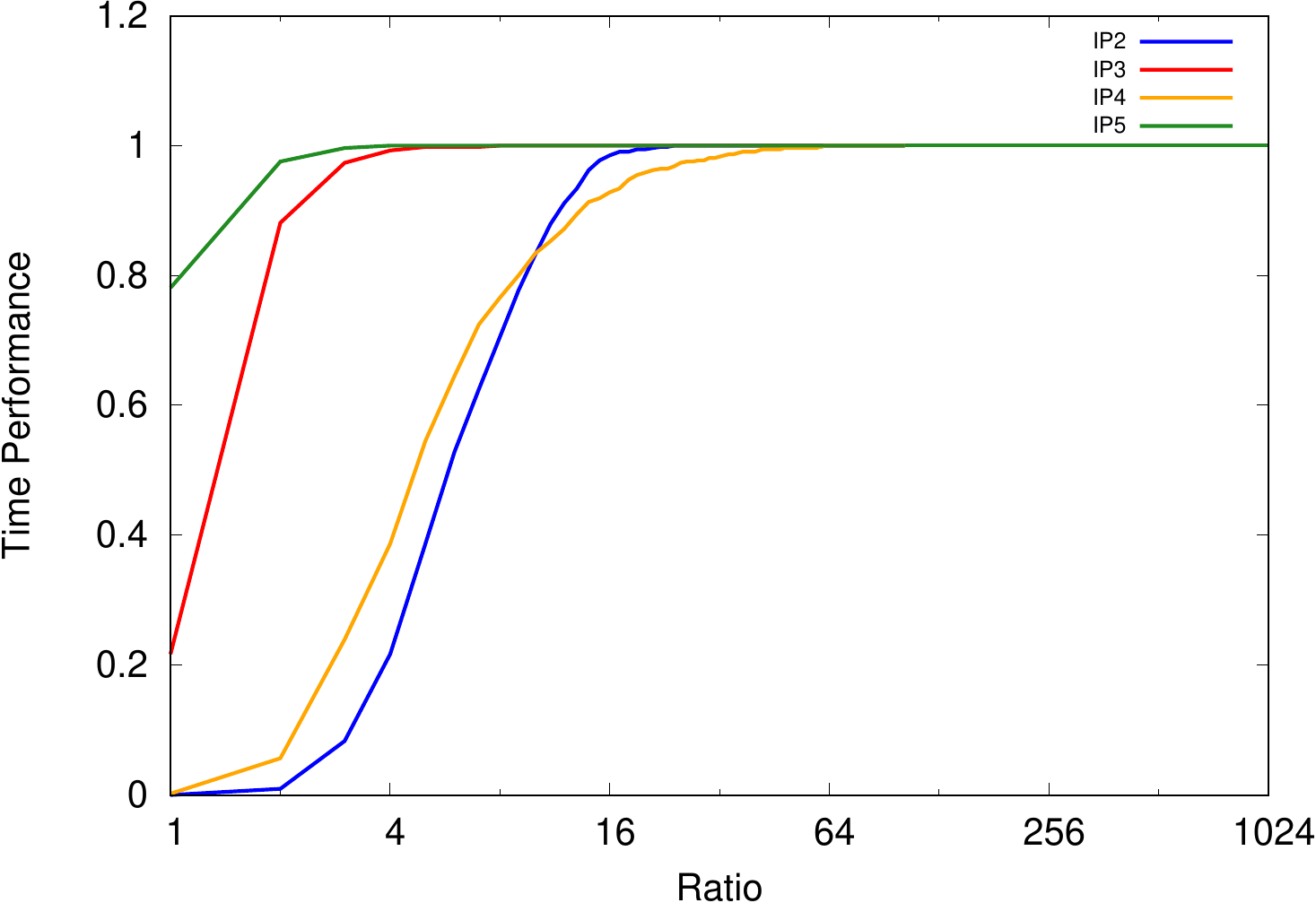}}
		\subfigure[\texttt{Gen-2}]{\includegraphics[width=0.45\textwidth]{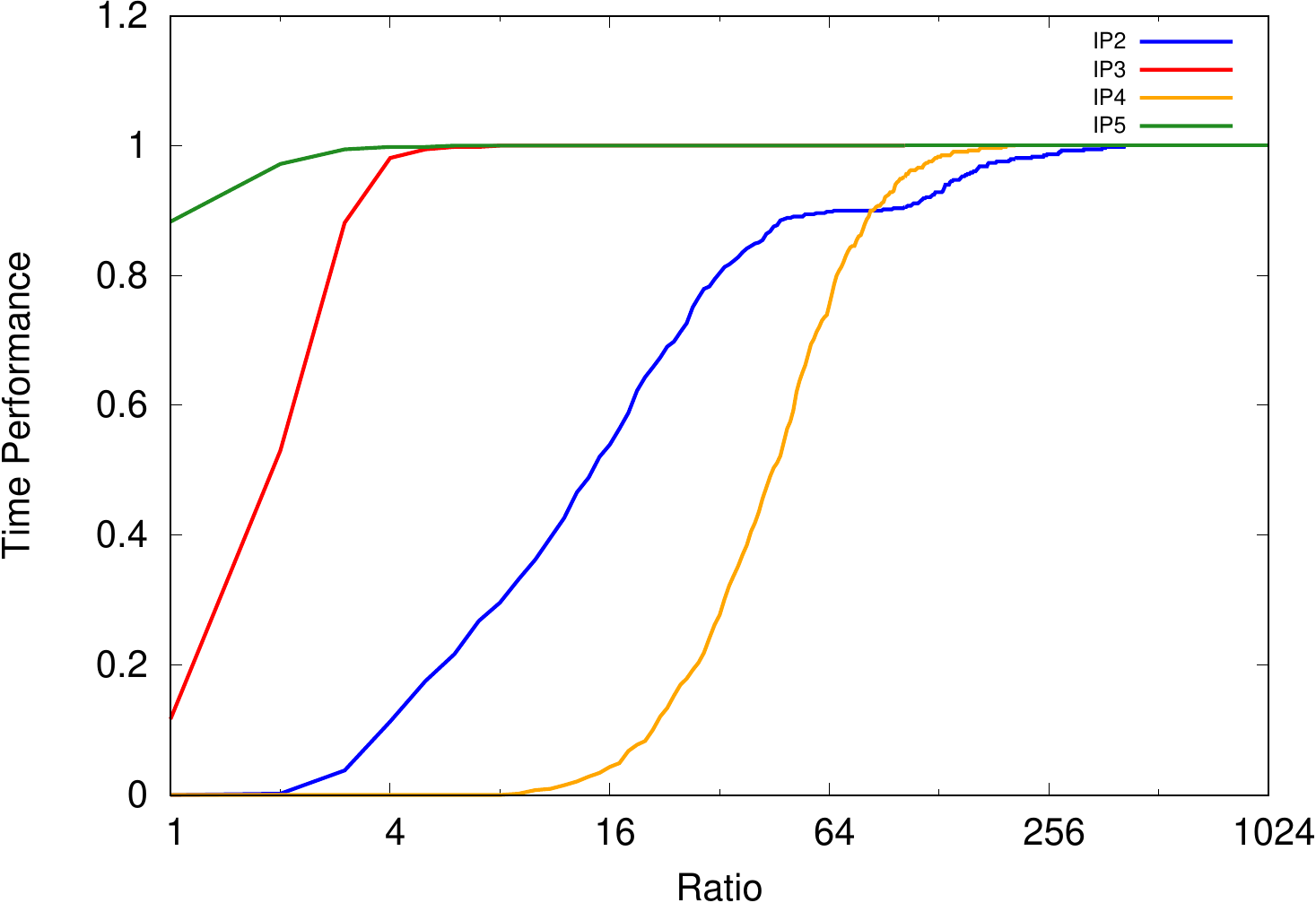}}
	\end{center}
	\caption{Selection - Exp3 - Performance profiles ($n=20$)}\label{exp3-n20-pp}
\end{figure}

Interestingly, the experiment for instances with $n=20$ shows that IP-2, IP-3 and IP-4 are dominated by IP-5 except for $p=1$ for both \texttt{Gen-1} and \texttt{Gen-2}. In this single case, IP-5 is outperformed by IP-3. This experiment shows that the problem solved faster for larger value of $p$. This can be explained by the observation that for larger values of $p$, nearly all items need to be selected (recall that we need to pack more than $p$ items to respect the uncertainty). Similar to other experiments, the performance profile (see Figure~\ref{exp3-n20-pp}) represents the obtained results for the average solution times also holds for the instance-wise comparisons of the given models.

\begin{figure}[htbp]
	\begin{center}
		\subfigure[\texttt{Gen-1}]{\includegraphics[width=0.45\textwidth]{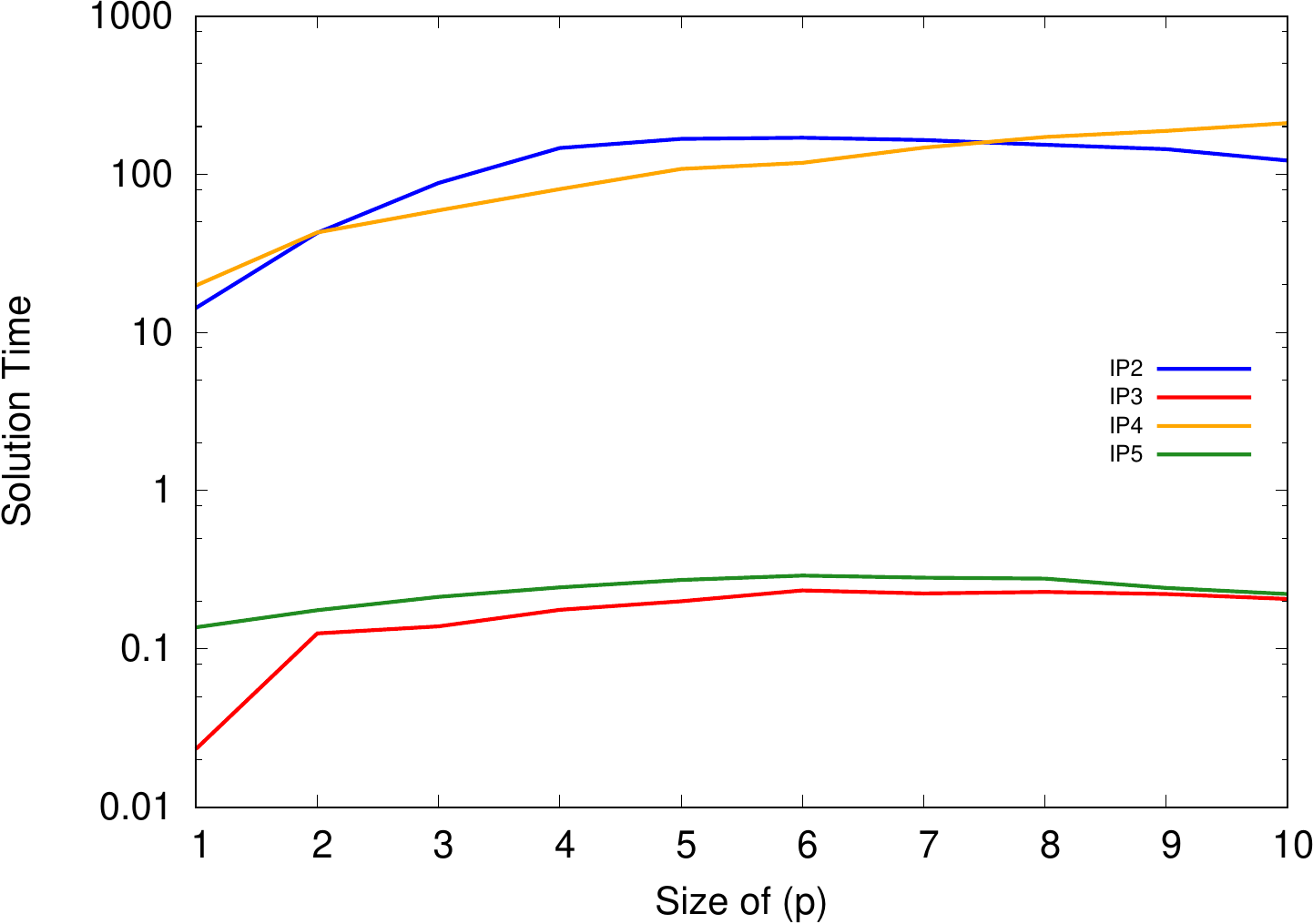}}
		\subfigure[\texttt{Gen-2}]{\includegraphics[width=0.45\textwidth]{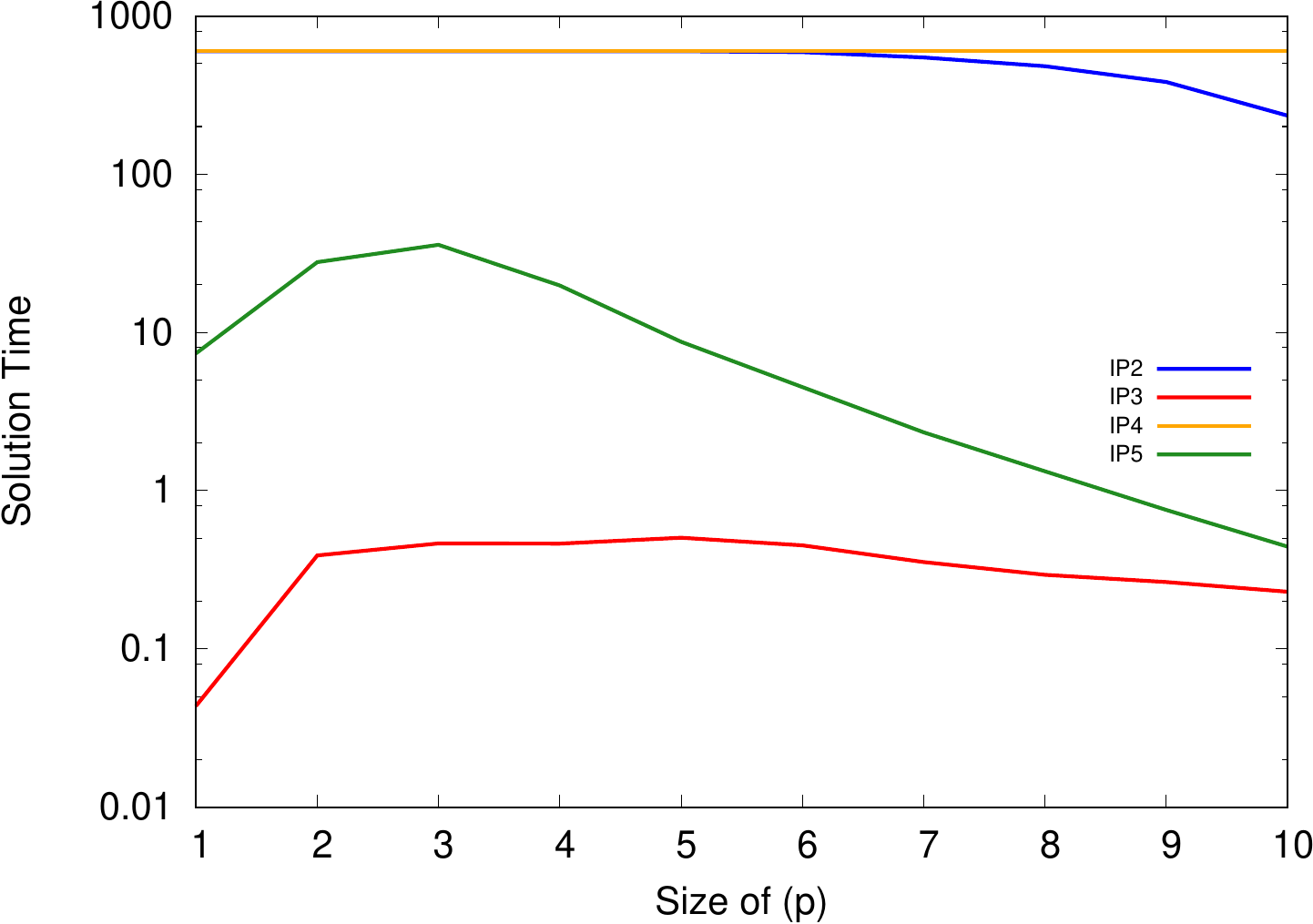}}
	\end{center}
	\caption{Exp3, solution times for $n=40$}\label{exp3-n40-time}
\end{figure}

\begin{figure}[htbp]
	\begin{center}
		\subfigure[\texttt{Gen-1}]{\includegraphics[width=0.45\textwidth]{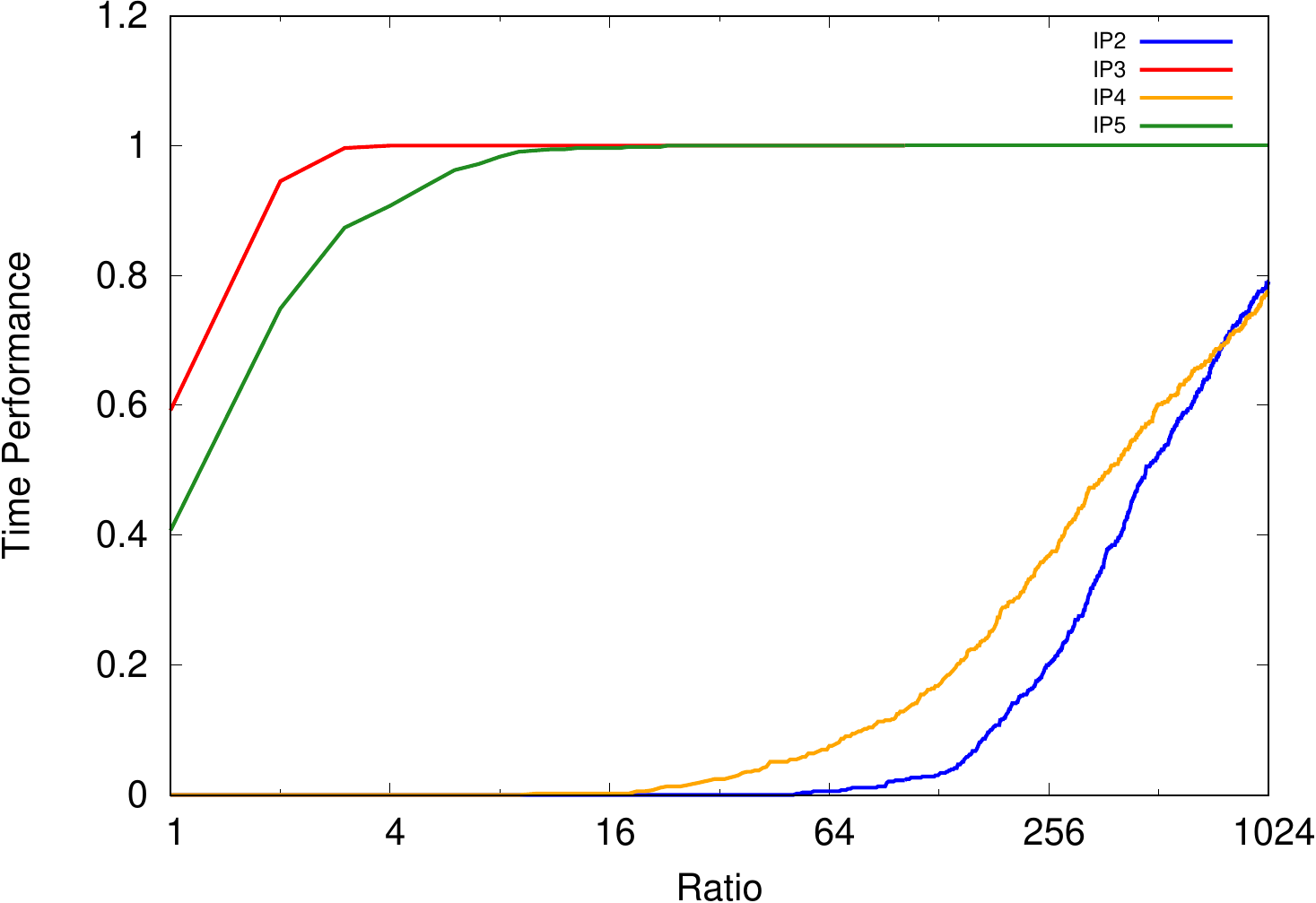}}
		\subfigure[\texttt{Gen-2}]{\includegraphics[width=0.45\textwidth]{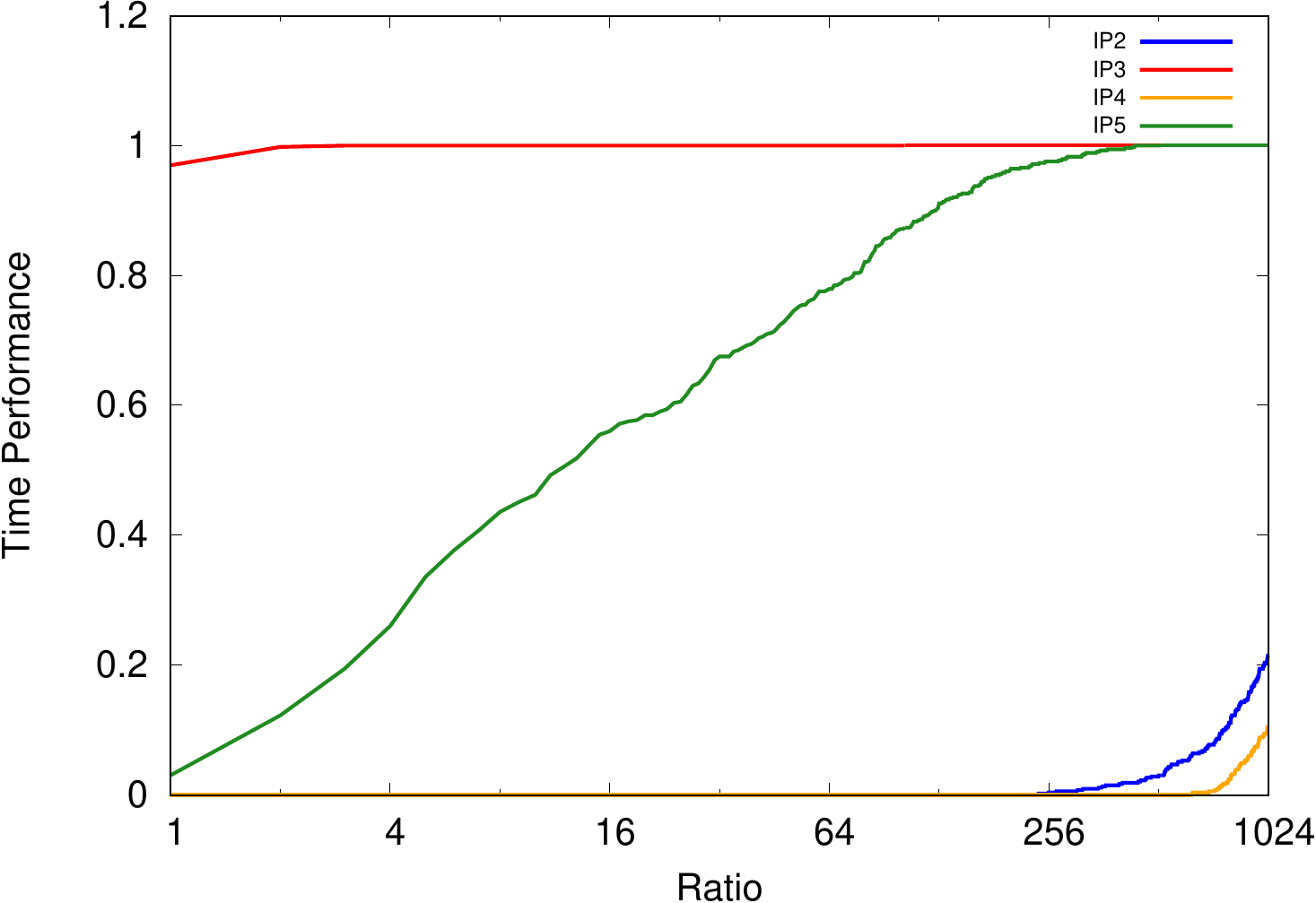}}
	\end{center}
	\caption{Selection - Exp3 - Performance profiles ($n=40$)}\label{exp3-n40-pp}
\end{figure}

The results depicted in Figures~\ref{exp3-n40-time} and \ref{exp3-n40-pp} show that the problems first tend to become harder to solve and then the solution time falls. Notably, unlike the case with $n=20$, IP-3 has the best performance with regard to the solution time. Another difference is that IP-3 is always superior in comparison to the other mathematical formulations. In this case the problem in considerably harder to solve than cases with $n=20$. In this sense, the problem hits the time limit even for $p=1$, while IP-3 never reaches even close to the time limit. The performance profile shows that IP-3 is almost always faster than other IPs.

\subsection{Job Assignment}

\subsubsection{Setup}

We now consider a job assignment problem with $m$ jobs and $n$ workers. Each job has a profit $p_j$ and workers demand of $d_j$ for all $j\in[m]$. Here, instead of weights for each item in the selection problem, we have a failure probability for each worker $w_i$ for all $i\in[n]$. Therefore, the robust job assignment problem under the budgeted interdiction uncertainty can be formulated as follows
\begin{align*}
\max\ &\sum_{j\in[m]} p_j \bar{z}_j \\
\text{s.t. } & \sum_{i\in[n]} (1-c_i) x_{ij} \ge d_j \bar{z}_j & \forall j\in[m], \pmb{c}\in \cU \\
& \sum_{j\in[m]} x_{ij} \le 1 & \forall i\in[n] \\
& x_{ij},\bar{z}_j\in\{0,1\}
\end{align*}

In this case and in order to see the performance of our IPs over the job assignment problem, we only consider IP-2, IP-3, IP-4 and IP-5. The compact formulations of these four IPs are collected in Appendix~\ref{app:jobassignment}. Furthermore, we introduce two experiments, where in experiment~1 we fix the number of jobs and change the number of workers; while in experiment~2 we fix the number of workers and vary the number of jobs.

Similar to the experiments on the selection problem, we use two types of instance generation methods for the job assignment problem under budgeted interdiction uncertainty, called \texttt{Gen-1} and \texttt{Gen-2}. In \texttt{Gen-1}, the job demands are chosen independently from the corresponding profits. In \texttt{Gen-2}, however, the profits of jobs depend on their demands. The generation methods are considered as follows:

\textbf{\texttt{Gen-1}}
\begin{itemize}
\item for each $i\in[m]$ we choose $d_i$ from $\{1,\ldots,\frac{2n}{m}\}$ independently random uniform.
\item for each $i\in[m]$ we choose $p_i$ from $\{1,\ldots,25\}$ independently random uniform.
\end{itemize}
\textbf{\texttt{Gen-2}}
\begin{itemize}
\item for each $i\in[m]$ we choose $d_i$ from $\{1,\ldots,\frac{2n}{m}\}$ independently random uniform.
\item for each $i\in[m]$ the value of $p_i$ depends on the value of $d_i$. If $d_i\le\frac{n}{m}$ then $p_i$ is chosen from $\{1,\ldots,25\}$, otherwise we choose $p_i$ from $\{10,\ldots,34\}$ randomly uniform. 
\end{itemize}
In both cases, we choose $w_i$ from $\{101,\ldots,150\}$ randomly uniform for all $i \in[n]$. Then we set
\[B = \Bigg\lfloor \frac{\sum_{i\in[n]} 2 w_i}{n} \Bigg\rfloor \]

\subsubsection{Experiment 1}\label{subsubsec:job-exp1}

In this experiment, we fix the number of jobs ($m$) and change the number of workers ($n$). To this end, we consider the case when $m = 5$ and $n = \{5,10,\ldots,40\}$. For each combination we solve 50 instances with a time limit of 600 seconds and show the average solution times. The results of this experiment is provided in Figures~\ref{ja-m5-time} and \ref{ja-m5-pp}.

The solution times presented in Figres~\ref{ja-m5-time} show that all introduced IPs perform similarly over the given generation methods. It can be seen that the problem constantly tends to be harder to solve from $n=5$ to $n=30$ and then solution times for all IPs decrease slightly. Here, IP-4 and IP-5 (which is one of the best also for the selection problem) are the fastest IPs. The reason is they have fewer number of constraints.

Like the solution times, Figres~\ref{ja-m5-pp} illustrates that IP-5 leads to the best performance profile, meaning that for most of instances it is the fastest IP following by IP-4. In this setting IP-2 is the worst IP in terms of solution times.

\begin{figure}[htbp]
	\begin{center}
		\subfigure[\texttt{Gen-1}]{\includegraphics[width=0.45\textwidth]{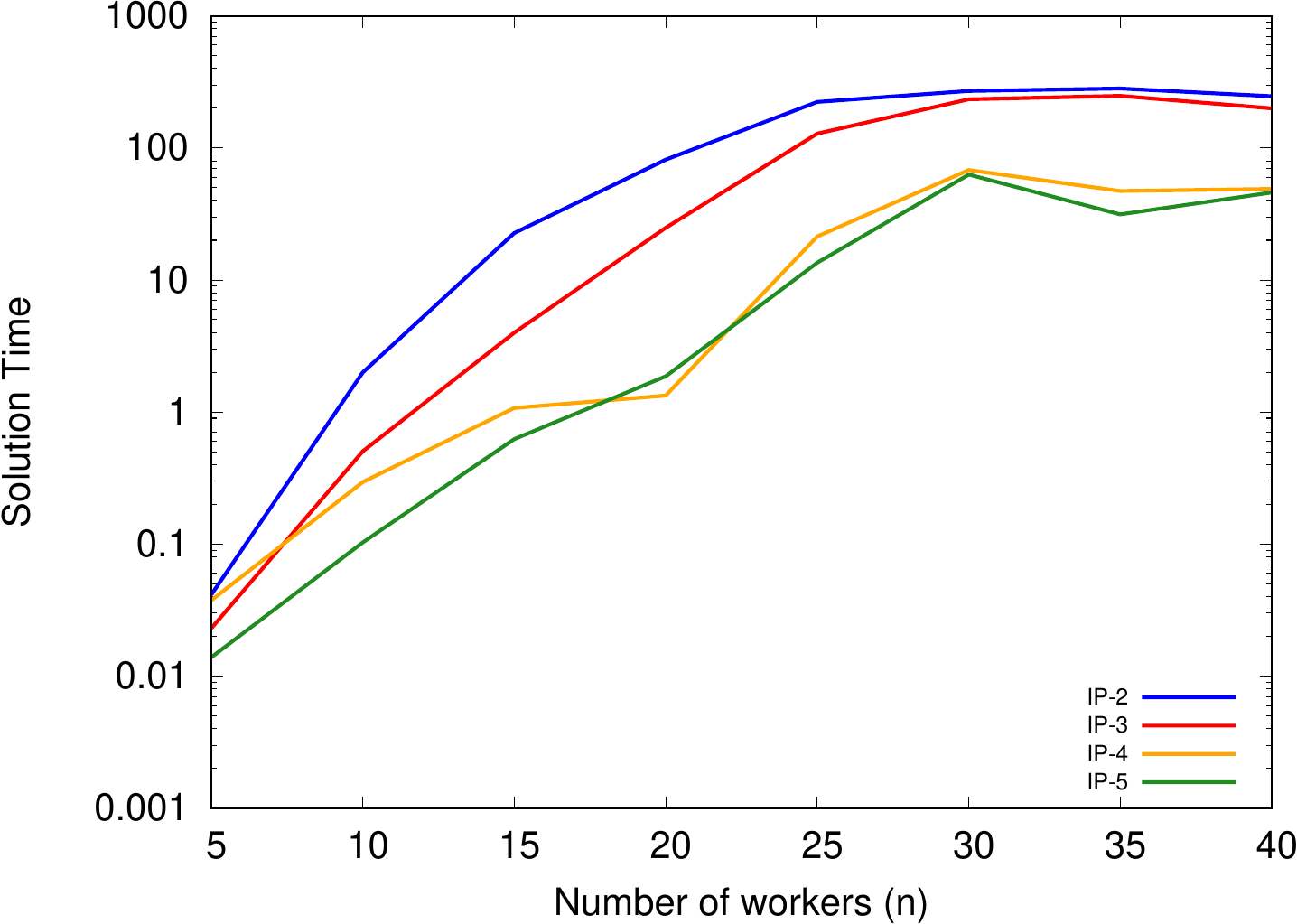}}
		\subfigure[\texttt{Gen-2}]{\includegraphics[width=0.45\textwidth]{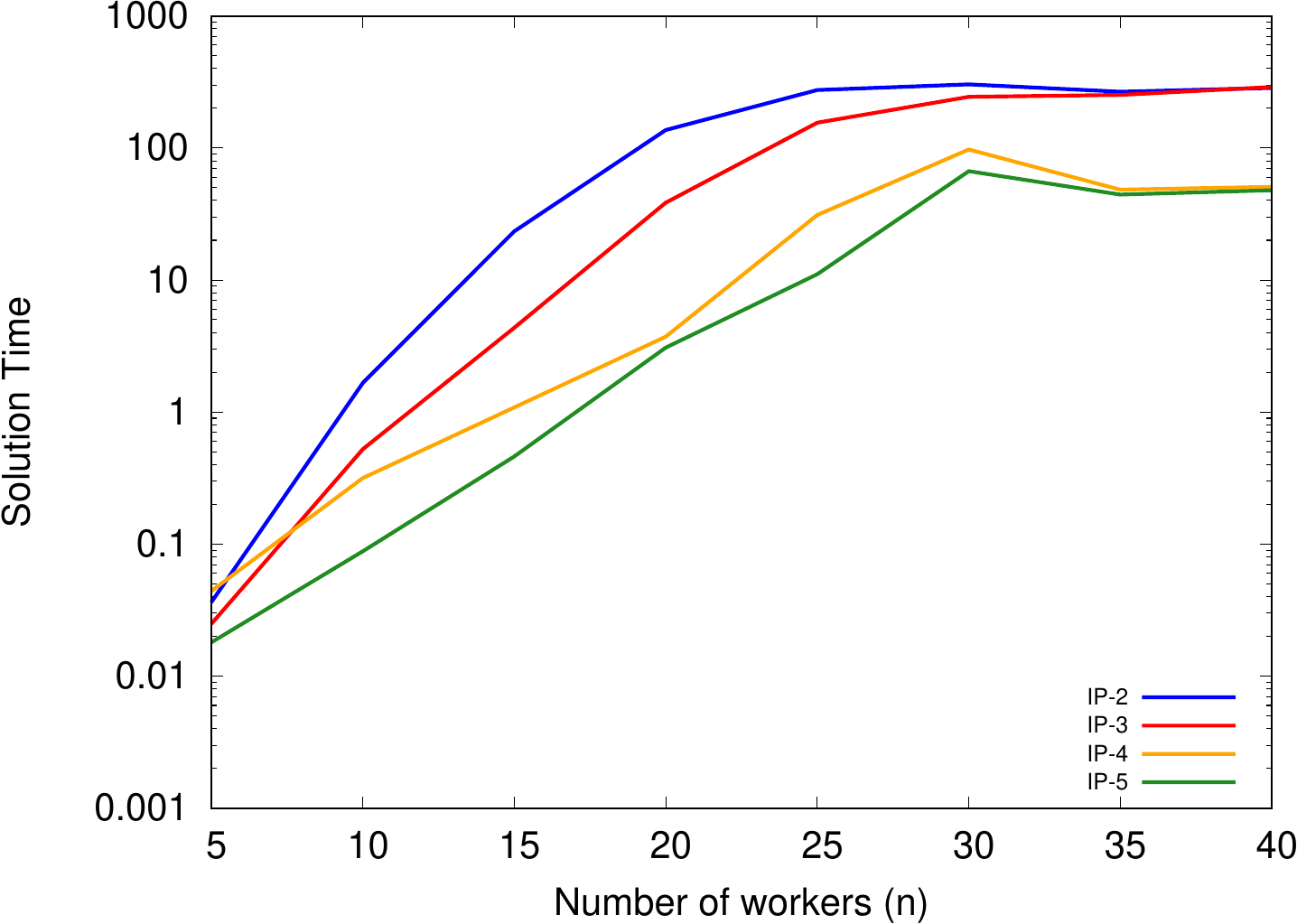}}
	\end{center}
	\caption{Job assignment - Exp1 - Solution times ($m=5$)}\label{ja-m5-time}
\end{figure}

\begin{figure}[htbp]
	\begin{center}
		\subfigure[\texttt{Gen-1}]{\includegraphics[width=0.45\textwidth]{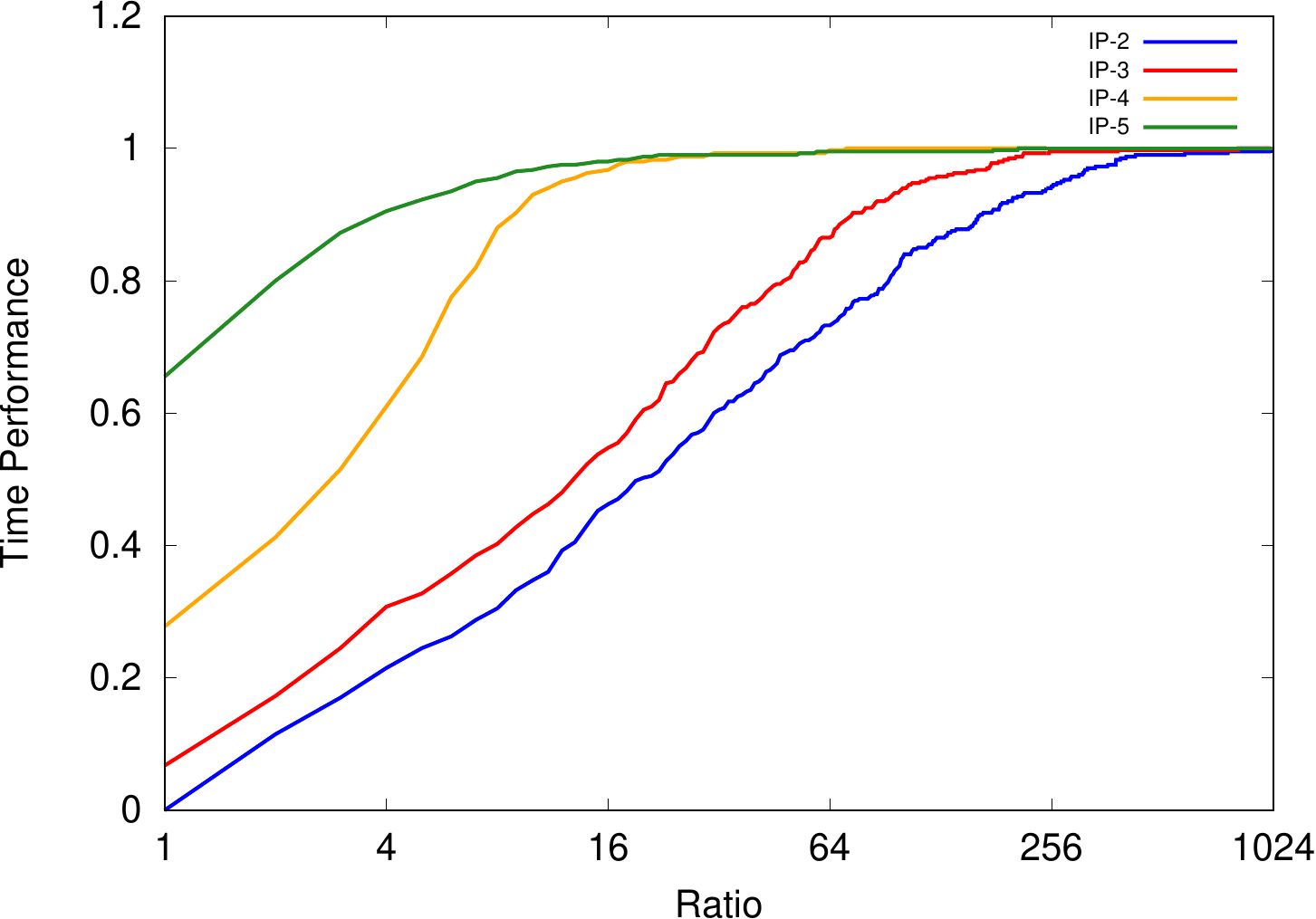}}
		\subfigure[\texttt{Gen-2}]{\includegraphics[width=0.45\textwidth]{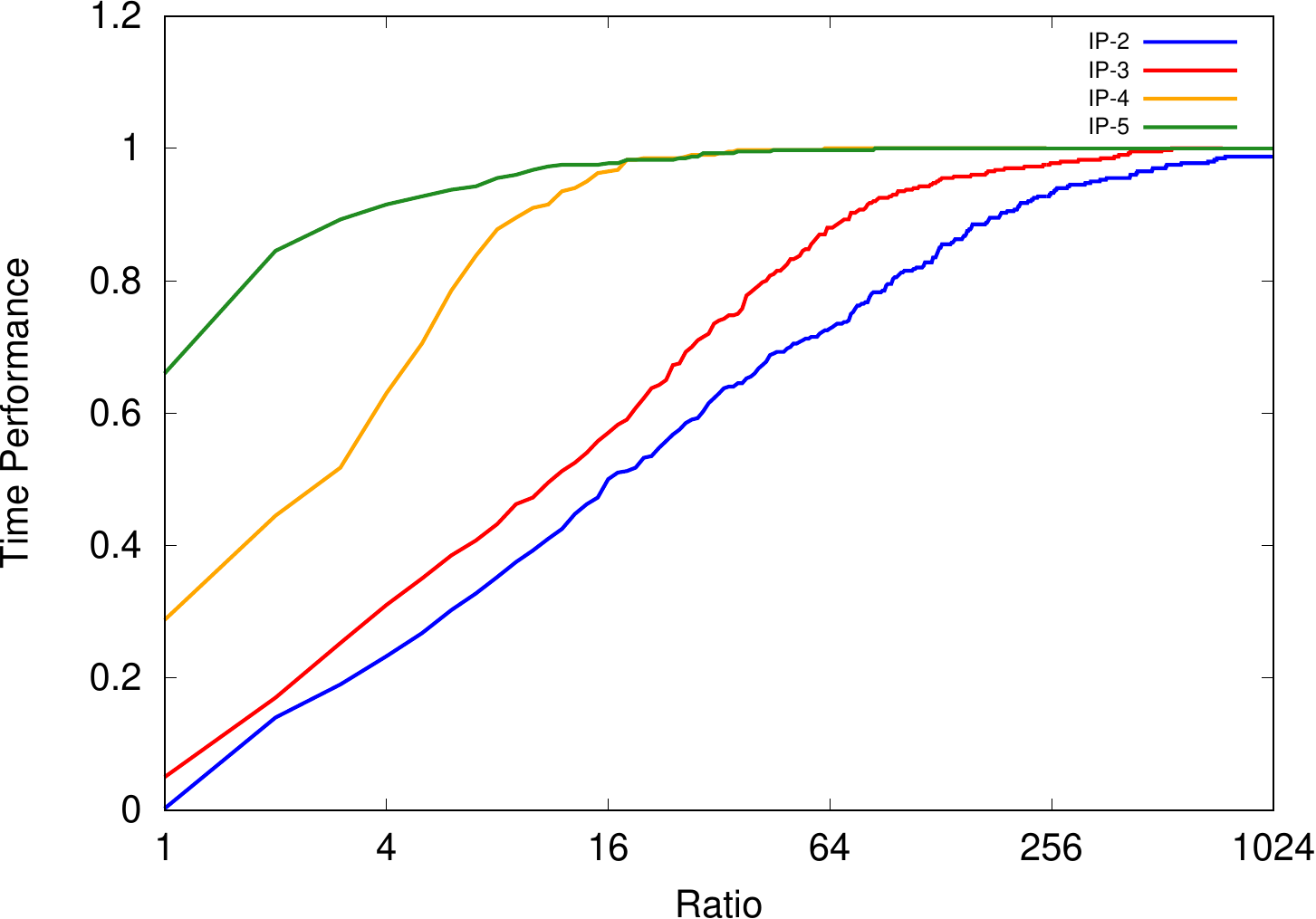}}
	\end{center}
	\caption{Job assignment - Exp1 - Performance profiles $m=5$}\label{ja-m5-pp}
\end{figure}

\subsubsection{Experiment 2}

In the second experiment of the job assignment problem, we fix the number of workers ($n=20$) and change the number of jobs ($n=\{2,3,\ldots,9\}$). Similarly, for each given combination we solve 50 instances with a time limit of 600 seconds and show the average solution times. The time performance of this experimental setting is shown in Figures~\ref{ja-n20-time} and \ref{ja-n20-pp}.

Here, again the same trend as experiment 1 (\ref{subsubsec:job-exp1}) can be observed in terms of both solution times and the corresponding performance profile. The results of the introduced generation methods are equivalent. However, the drop of the solution times for the larger case of instances are less noticeable. Likewise, IP-5 has both the best average and instance-wise solution times and the slowest IP is again IP-2.

\begin{figure}[htbp]
	\begin{center}
		\subfigure[\texttt{Gen-1}]{\includegraphics[width=0.45\textwidth]{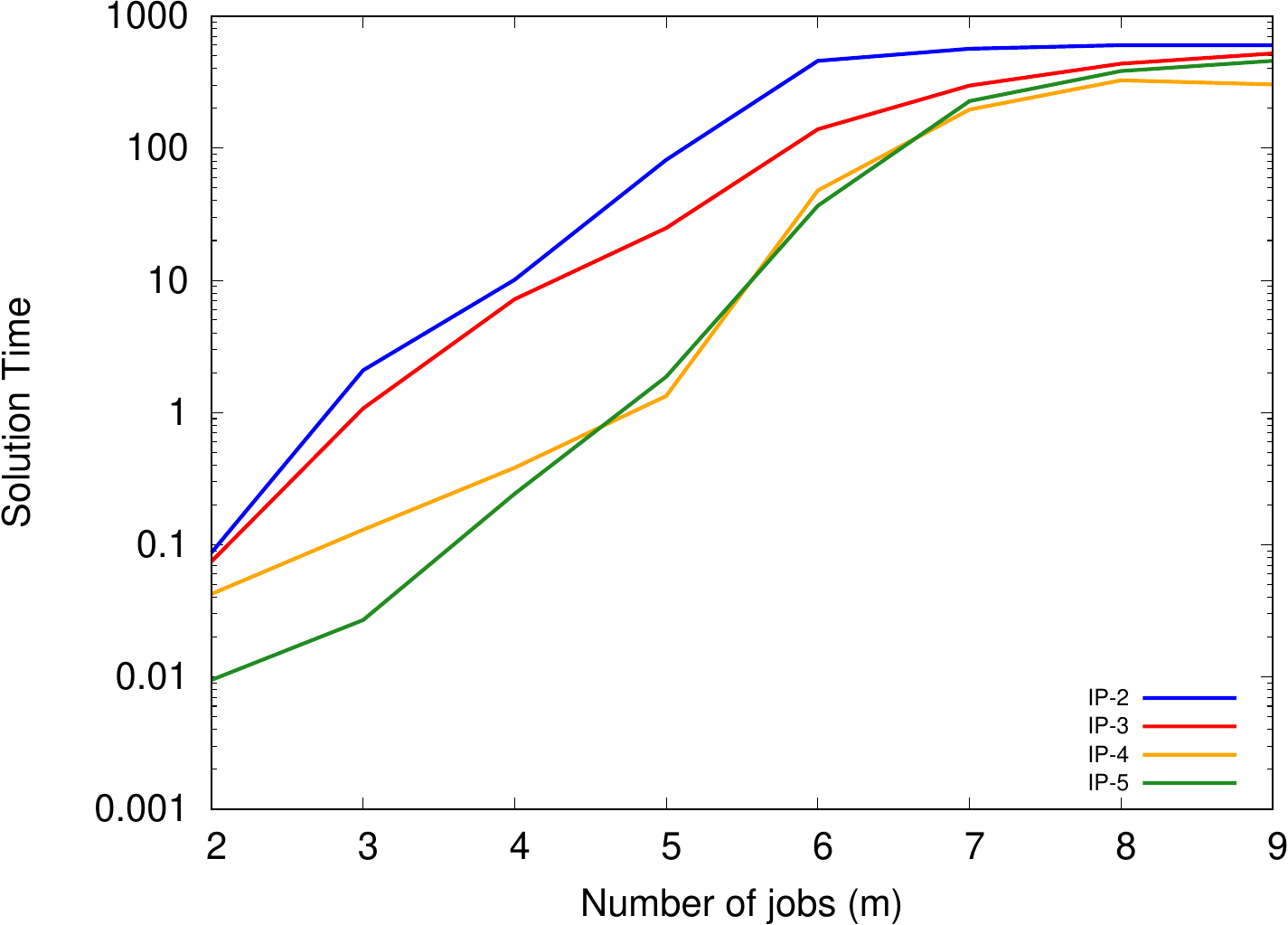}}
		\subfigure[\texttt{Gen-2}]{\includegraphics[width=0.45\textwidth]{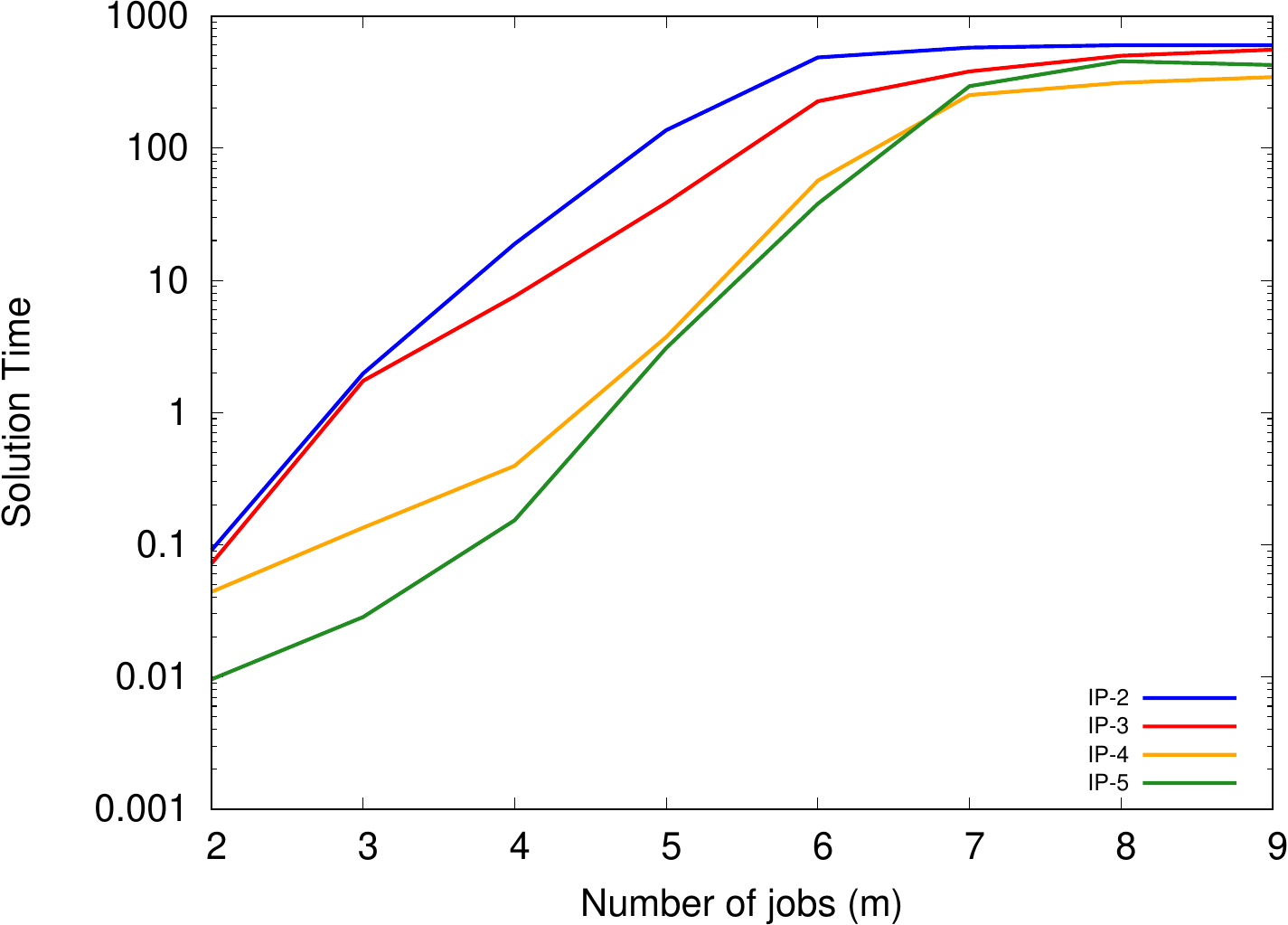}}
	\end{center}
	\caption{Job assignment - Exp2 - Solution times $n=20$}\label{ja-n20-time}
\end{figure}

\begin{figure}[htbp]
	\begin{center}
		\subfigure[\texttt{Gen-1}]{\includegraphics[width=0.45\textwidth]{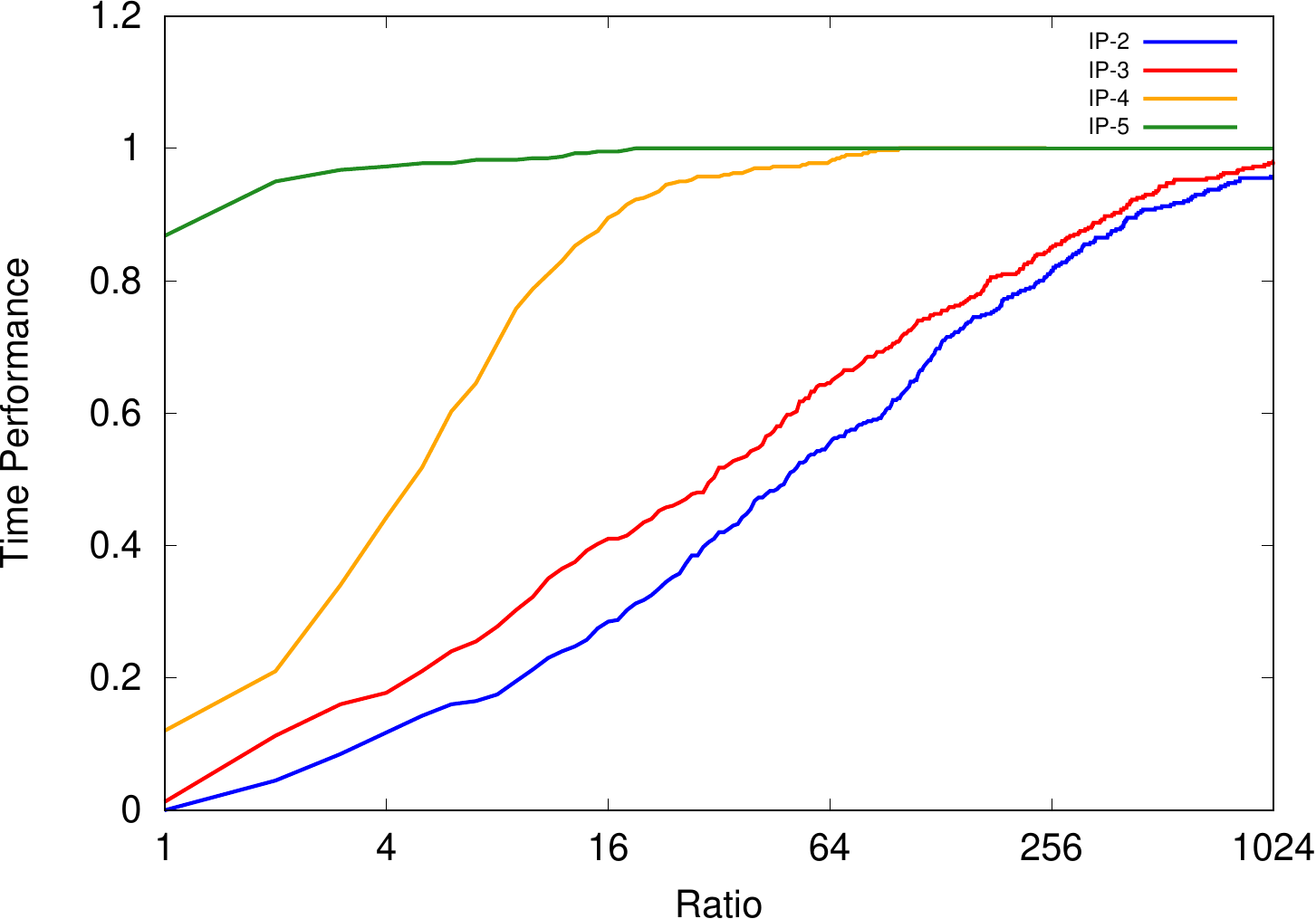}}
		\subfigure[\texttt{Gen-2}]{\includegraphics[width=0.45\textwidth]{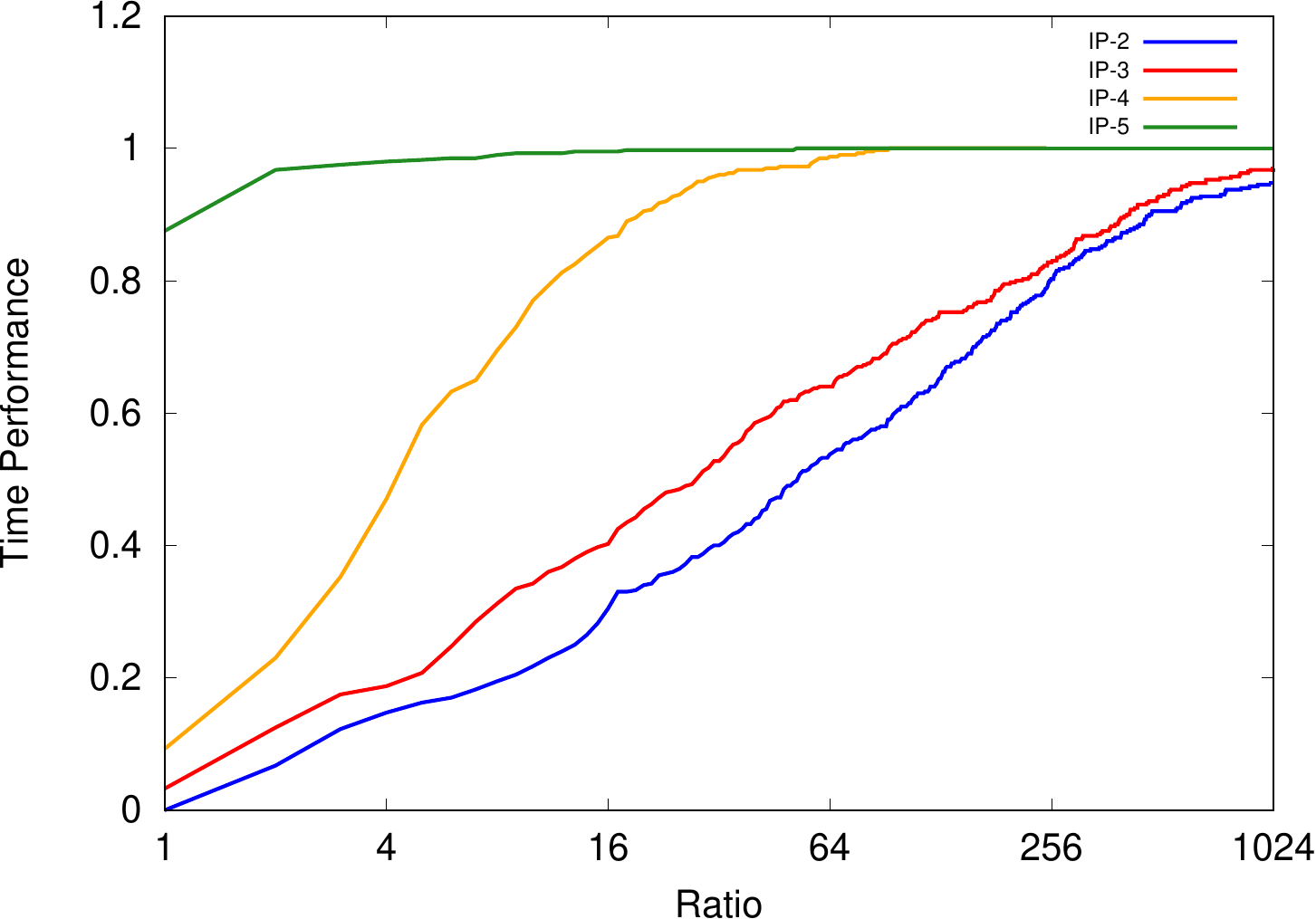}}
	\end{center}
	\caption{Job assignment - Exp2 - Performance profiles $n=20$}\label{ja-n20-pp}
\end{figure}

\subsection{2-Edge-Connected Spanning Subgraph}

\subsubsection{Setup}

In the NP-hard 2-edge-connected spanning subgraph problem, an undirected graph $G=(V,E)$ is given with edge weights $\pmb{d}\in\mathbb{R}^{|E|}_+$. The objective is to find a subset of edges $E'$ with minimum weight such that $G[E']$ is 2-edge-connceted, i.e., there are two edge-disjoint paths between any pair of nodes (see, e.g., \cite{huh2004finding}). We consider a robust version where edges can fail, but the subgraph is still required to remain 2-edge-connected.

The nominal problem can be formulated as follows:
\begin{align*}
\min\ &\sum_{\{i,j\}\in E} d_{ij} x_{ij} \\
\text{s.t.} & \sum_{\{i,j\}\in C} x_{ij} \ge 2 & \forall C\in\mathcal{C} \\
& x_{ij} \in \{0,1\} & \forall \{i,j\}\in E
\end{align*}
where $\mathcal{C}$ denotes the set of all cuts in the graph $G$. As there are exponentially many constraints, we use an iterative procedure. In Appendix~\ref{app:cutbased}, we describe models IP-2, IP-3, IP-4 and IP-5 for a subset $\mathcal{C}'\subseteq \mathcal{C}$ of cuts. We begine with $\mathcal{C}'=\emptyset$, solve the corresponding formulation, and check if the resulting solution $\pmb{x}$ is feasible with respect to all cuts $\mathcal{C}$. To check if a cut is violated, we solve the following IP:
\begin{align*}
\min & \sum_{e\in E} y_e + \epsilon z_e\\
\text{s.t.}\ & z_{ij} \ge u_i - u_j & \forall \{i,j\} \in E \\
& z_{ij} \ge u_j - u_i & \forall \{i,j\} \in E \\
& 1 \le \sum_{i\in[n]} u_i \le n-1 \\
& y_e \ge x_e + z_e - c_e - 1 & \forall e \in E \\
& \sum_{e\in E} w_e c_e \le B \\
& y_i, c_i, z_i \in \{0,1\} & \forall i \in [m] \\
& u_i \in \{0,1\} & \forall i \in [n]
\end{align*}
If a violated cut can be found, it is added to $\mathcal{C}'$, and the robust problem is solved again, until convergence is reached.

We introduce one experiment where the number of nodes of the given graph vary. We also set the density ($D$) of the graph equal to 0.8.

Unlike the experiments on both the selection and job assignment problem, we just introduce on approach of instance generation for the cut based problem under budgeted interdiction uncertainty. In this case our graph has $n$ nodes and $m$ edges, where $m = D \times \tfrac{n(n-1)}{2}$. Here, for each $e\in E$ we choose $d_e$ from $\{1,\ldots,10\}$ and $w_e$ from $\{5,\ldots,10\}$ randomly uniform. Moreover, we set $B=15$.

\subsubsection{Experiment}

In the only experiment of the cut based problem, we change the number of nodes and thus choose $n$ from $\{10,12,\ldots,30\}$. Similarly, for each instance size we solve 50 instances with a time limit of 600 seconds and show the average solution times. The time performance of this experimental setting is shown in Figure~\ref{cut-times}.

The results gathered in Figure~\ref{cut-times} show that the given IPs perform similarly for both the job assignment and the cut based problem. Here, again IP-5 is the fastest IP, whose performance profile also dominates the other methods. The second best is IP-4 and the worst performance belongs to IP-2.

\begin{figure}[htbp]
	\begin{center}
		\subfigure[Solution time]{\includegraphics[width=0.45\textwidth]{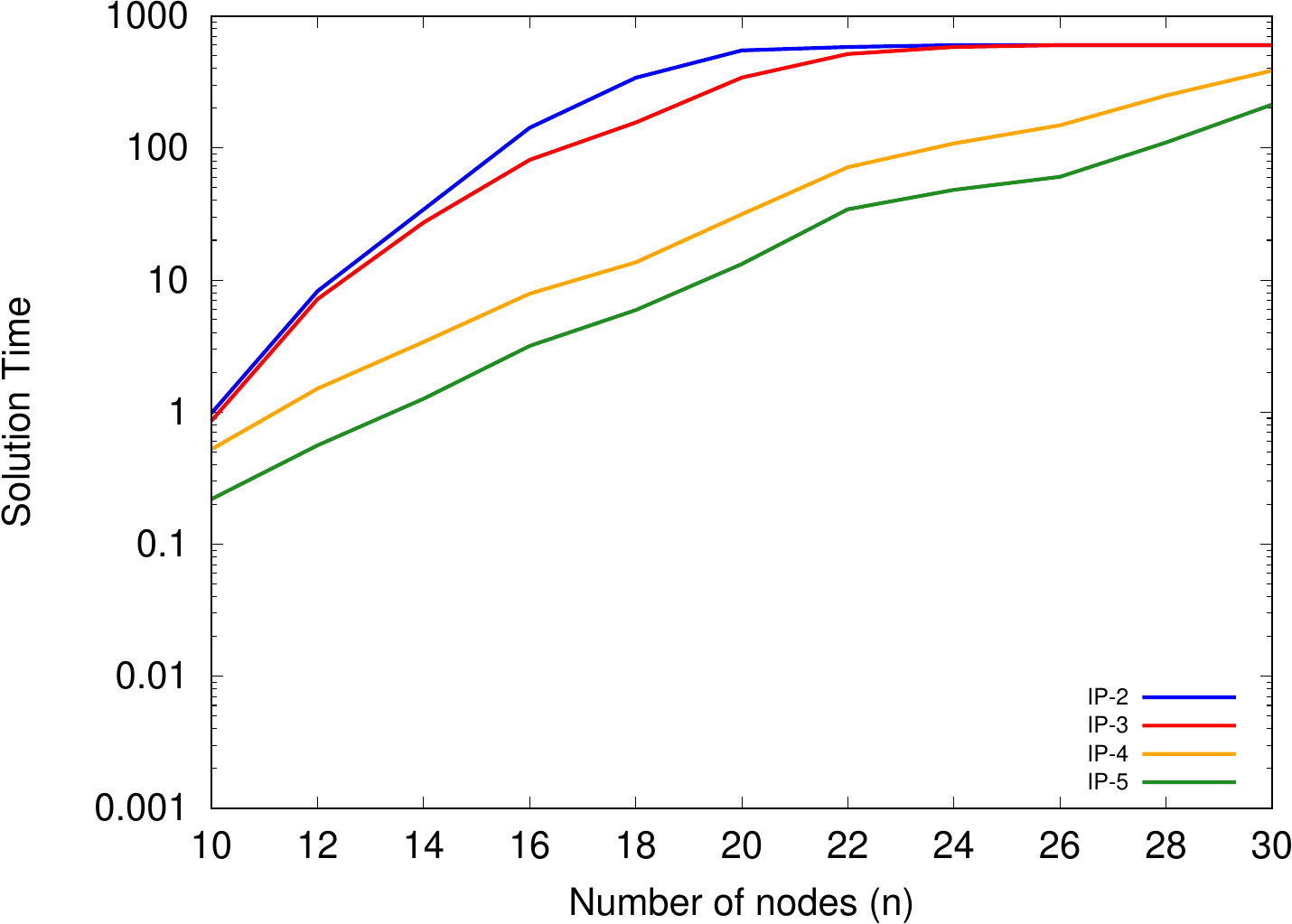}}
		\subfigure[Performance profile]{\includegraphics[width=0.45\textwidth]{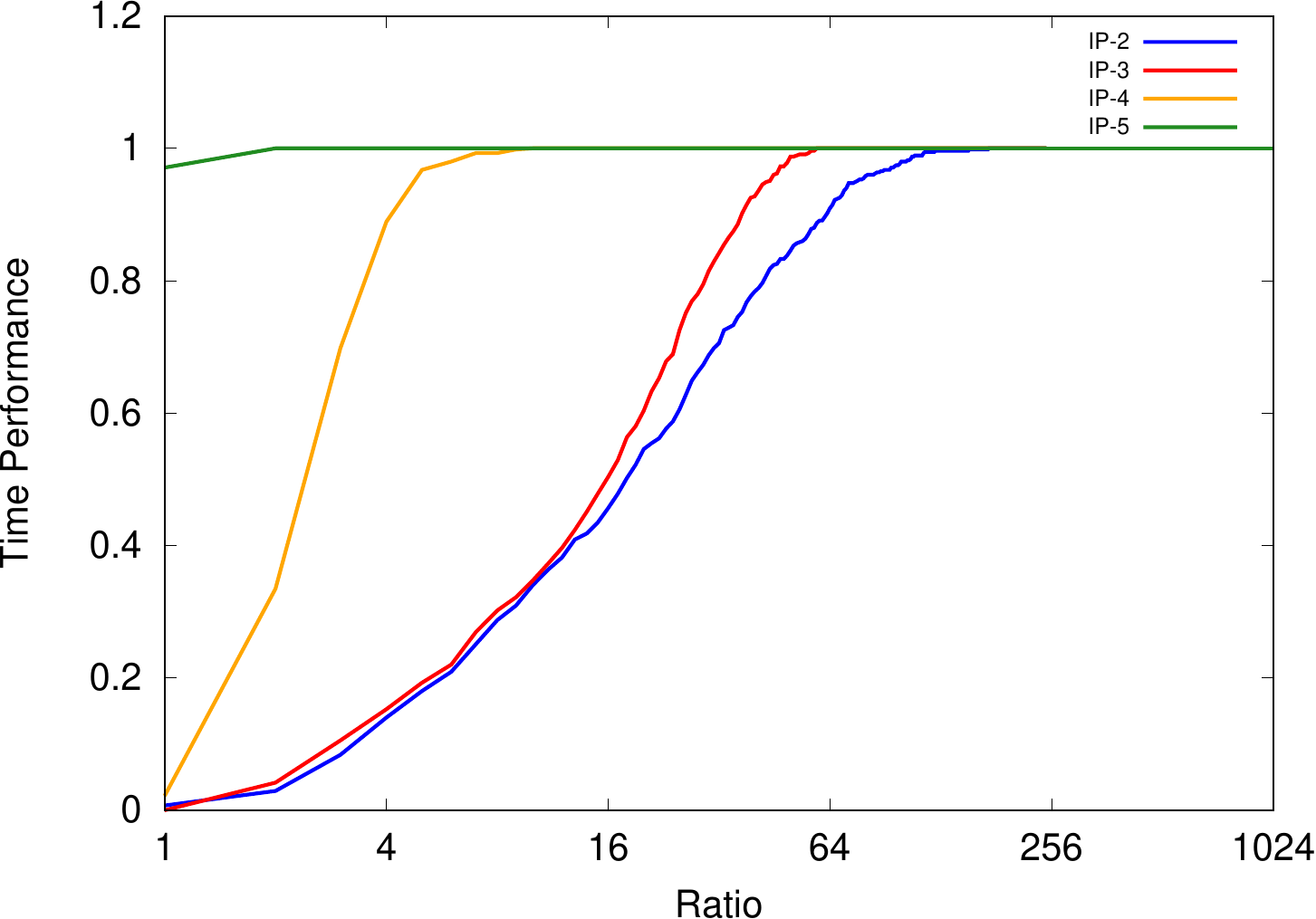}}
	\end{center}
	\caption{Spanning subgraph - Time performance ($D=0.8$)}\label{cut-times}
\end{figure}

\section{Conclusions}
\label{sec:conclusions}

Tools to model uncertainty sets are of central importance in robust optimization. While classic budgeted uncertainty sets make the assumption that each ``attack'' (i.e., changing a parameter away from its nominal value) has the same costs, extensions such as knapsack uncertainty sets relax this constraint and allow different attacks to have different costs. However, for a discrete knapsack uncertainty set, this means that even evaluating a solution (i.e., calculating the worst possible attack) requires us to solve an NP-hard knapsack problem.

In this paper, we propose an alternative uncertainty set, where attacks may have different costs, but each attack leads to the same consequence. Such a model is particularly useful to model uncertainty in cardinality-based constraints or objectives. This type of uncertainty has the advantage that calculating a worst-case attack is still possible in polynomial time, even though the corresponding robust problems become hard. We also demonstrated how this approach can be applied to a job assignment problem, and the problem of finding a minimum-cost 2-edge-connected subgraph under failures.

We consider different ways to model the worst-case attack, which lead to a total of five compact integer programming formulations, none of which dominates the other. Our computational experiments for the selection prblem indicate that in particular models IP-3 (based on rounding down attack budgets with integer variables) and IP-5 (based on forcing a binary variable to become active if an item can be attacked) show promising performance and can solve problems to proven optimality with up to 100 items within seconds. However, given the job assignment and finding minimum cost 2-edge-connected subgraph problem, IP-5 has the best performance followed by IP-4.

In further research, it will be interesting to explore if additive approximation algorithms may be possible, as multiplicative approximation guarantees are impossible to achieve. Furthermore, we intend to study budgeted interdiction sets in multi-stage environments such as two-stage and recoverable robust optimization.

%\FloatBarrier
\appendix

\section{Job Assignment Models}\label{app:jobassignment}

In this section we summarize the compact formulation of IP-2, IP-3, IP-4 and IP-5 that we obtained for the job assignment problem under the budgeted interdiction uncertainty set where the intermediate steps are excluded and only the final IPS are shown.

\subsection*{IP-2}
\begin{align*}
\max & \sum_{j\in[m]} p_j \bar{z}_j  \\ 
\text{s.t.} & \sum_{i\in[n]} x_{ij} - \beta_j \geq d_j \bar{z}_j & \forall j \in [m]\\
& \sum_{i\in[k]} (w_i \mu_{ijk} - x_{ij}) \geq B \alpha_{kj} - \beta_j & \forall j \in [m], k \in [n] \\
& \mu_{ijk} \leq k x_{ij} & \forall j \in [m], k\in[n], i\in[k]\\
& \mu_{ijk} \leq \alpha_{kj} & \forall j \in [m], k\in[n], i\in[k]\\
& \mu_{ijk} \geq 0 & \forall j \in [m], k\in[n], i\in[k]\\
& \alpha_{kj} \geq 0 & \forall j \in [m], k \in [n]\\
& \beta_j \geq 0 & \forall j \in [m]\\
& \pmb{x}\in\X
\end{align*}

\subsection*{IP-3}
\begin{align*}
\max & \sum_{j\in[m]} p_j \bar{z}_j \\
\text{s.t.} & \sum_{i\in[n]} x_{ij} - \alpha_j \geq d_j \bar{z}_j & \forall j \in [m]\\
& \sum_{i\in[k]} (x_{ij} - z_{ijk}) \leq \alpha_j & \forall j \in [m], k \in [n] \\
& z_{ijk} \leq y_{jk} & \forall j \in [m], k\in[n], i\in[k] \\
& z_{ijk} \leq \Bigg\lfloor \frac{\sum_{l\in[n]} w_l}{B} \Bigg\rfloor x_{ij} & \forall j \in [m], k\in[n], i\in[k] \\
& y_{jk} \leq \frac{\sum_{i\in[k]} w_i x_{ij}}{B} & \forall j \in [m], k\in[n] \\
& z_{ijk} \geq 0 & \forall j \in [m], k\in[n], i\in[k] \\
& \alpha_j \geq 0 & \forall j\in[m] \\
& \pmb{y} \in \mathbb{Z}^n \\
&\pmb{x}\in\X
\end{align*}

\subsection*{IP-4}
\begin{align*}
\max & \sum_{j\in[m]} p_j \bar{z}_j \\
\text{s.t.} & \sum_{i\in[n]} x_{ij} \geq d_j \bar{z}_j + \sum_{k\in[n]} y_{kj} & \forall j\in[m] \\
& (B + 1) y_{kj} + k\alpha^k_j - \sum_{i\in[n]} \beta^k_{ij} \geq B +1 & \forall j\in[m], k \in [n] \\
& \alpha^k_j - \beta^k_{ij} \leq w_i + (B+1)(1-x_{ij}) & \forall j\in[m], k\in[n], i\in[n] \\
& \alpha^k_j \geq 0 & \forall j\in[m], k\in[n] \\
& \beta^k_{ij} \geq 0 & \forall j\in[m], k\in[n], i\in[n] \\
& \pmb{y}\in\{0,1\}^n \\
& \pmb{x}\in\X
\end{align*}

\subsection*{IP-5}
\begin{align*}
\max & \sum_{j\in[m]} p_j \bar{z}_j\\
\text{s.t.} & \sum_{i\in[n]} x_{ij} \geq d_j \bar{z}_j + \sum_{k\in[n]} y_{kj} & \forall j\in[m] \\
& (B+1)(1-x_{jk}+y_{kj}) \ge B + 1 - \sum_{i\in[k]} w_i x_{ij} & \forall j\in[m], k\in[n] \\
& \pmb{y}\in\{0,1\}^n \\
& \pmb{x}\in\X
\end{align*}

\section{Cut-Based Models}\label{app:cutbased}

Let $\mathcal{C}'$ be a subset of all cuts. For each cut $C$, denote by $C=\{ e(C,1),\ldots, e(C,|C|)\}$ the corresponding edges, sorted by their weight $w_e$. We have $[|C|] = \{1,\ldots,|C|\}$.

% we consider $\widehat{C}$ as the set of cuts and $\overline{C}=\lvert \widehat{C} \rvert$. In addition for all $i \in \overline{C}$ we assume $\bar{c}_i = \lvert \widehat{C}_i \rvert$. Furthermore, for all $i$ and $j$ we set $\widehat{C}_{ij} = \widehat{ij}$.}

\subsection*{IP-2}
\begin{align*}
\min & \sum_{e\in E} d_e x_e  \\
\text{s.t.} & \sum_{e \in C}  x_e \ge 2 + \beta_C & \forall C\in\mathcal{C}'\\
& \sum_{\ell\in[k]} w_{e(C,\ell)} \mu_{C\ell k} - x_{e(C,\ell)} \ge B \alpha_{Ck} - \beta_C & \forall C \in \mathcal{C}', k \in [|C|]\\
& \mu_{C\ell k} \le k x_{e(C,\ell)} & \forall C\in\mathcal{C}', k\in[|C|], \ell\in[k] \\
& \mu_{C\ell k} \le \alpha_{Ck} & \forall C\in\mathcal{C}', k\in[|C|] \\
& x_e \in \{0,1\} & \forall e \in E \\
& \beta_C \ge 0 & \forall C \in \mathcal{C}' \\
& \alpha_{Ck} \ge 0 & \forall C\in\mathcal{C}', k\in[|C|] \\
& \mu_{C\ell k} \ge 0 & \forall C\in\mathcal{C}', k\in[|C|], \ell\in[k]
\end{align*}

\subsection*{IP-3}
\begin{align*}
\min & \sum_{e\in E} d_e x_e  \\
\text{s.t.} & \sum_{e \in C}  x_e \ge 2 + \alpha_C & \forall C \in \mathcal{C}' \\
& \sum_{\ell \in [k]} x_{e(C,\ell)} - z_{C\ell k} \le \alpha_C & \forall C\in\mathcal{C}', k\in[|C|] \\
& z_{C\ell k} \le y_{Ck} & C\in\mathcal{C}', k\in[|C|], \ell\in[k] \\
& z_{C\ell k} \le M x_{e(C,\ell)} & \forall C\in\mathcal{C}', k\in[|C|], \ell\in[k] \\
& (B+\epsilon) y_{C,k} \le \sum_{\ell\in[k]} w_{e(C,\ell)} x_{e(C,\ell)} & \forall C\in\mathcal{C}', k\in[|C|] \\
& x_e \in \{0,1\} & \forall e\in E \\
& \alpha_C \ge 0 & \forall C\in\mathcal{C}' \\
& z_{C\ell k} \ge 0 & \forall C\in\mathcal{C}', k\in[|C|], \ell\in[k] \\
& y_{Ck} \in \mathbb{Z} & \forall C\in\mathcal{C}', k\in[|C|]
\end{align*}
where
\[M = \Bigg\lfloor \frac{\sum_{e\in E} w_e}{B+\epsilon} \Bigg\rfloor\]

\subsection*{IP-4}
\begin{align*}
\min & \sum_{e\in E} d_e x_e  \\
\text{s.t.} & \sum_{e\in C}  x_e \ge 2 + \sum_{k\in [|C|]} y_{Ck} & \forall C\in\mathcal{C}'\\
& (B+1) y_{Ck} + k \alpha_{Ck} - \sum_{\ell\in [|C|]} \beta_{Ck\ell} \ge B + 1  & \forall C\in\mathcal{C}', k\in[|C|] \\
& \alpha_{Ck} - \beta_{Ck\ell} \le w_{e(C,\ell)} + (B+1) (1-x_{e(C,\ell)})  & \forall C\in\mathcal{C}', k\in[|C|], \ell\in[|C|] \\
& x_e \in \{0,1\} & \forall e \in E \\
& y_{Ck} \in \{0,1\} & \forall C\in\mathcal{C}', k\in[|C|] \\
& \alpha_{Ck} \ge 0 & \forall  C\in\mathcal{C}', k\in[|C|] \\
& \beta_{Ck\ell} \ge 0 & \forall C\in\mathcal{C}', k\in[|C|], \ell\in[|C|]
\end{align*}

\subsection*{IP-5}
\begin{align*}
\min & \sum_{e\in E} d_e x_e  \\
\text{s.t.} & \sum_{e\in C}  x_e \ge 2 + \sum_{e \in C} y_{Ce} & \forall C\in\mathcal{C}' \\
& (B+1) (1-x_{e(C,k)} +  y_{Ce(C,k)}) \ge (B + 1) - \sum_{\ell\in[k]} w_{e(C,\ell)} x_{e(C,\ell)} & \forall C\in\mathcal{C}', k\in[|C|] \\
& x_e \in \{0,1\} & \forall e \in E \\
& y_{Ce} \in \{0,1\} & \forall C\in\mathcal{C}', e \in C\\
\end{align*}

\end{document}